\begin{document}



\newtheorem{thm}{Theorem}[section]
\newtheorem{prop}[thm]{Proposition}
\newtheorem{lem}[thm]{Lemma}
\newtheorem{df}[thm]{Definition}
\newtheorem{cor}[thm]{Corollary}
\newtheorem{rem}[thm]{Remark}
\newtheorem{ex}[thm]{Example}
\newenvironment{proof}{\medskip
\noindent {\bf Proof.}}{\hfill \rule{.5em}{1em}\mbox{}\bigskip}

\def\eps{\varepsilon}

\def\GlR#1{\mbox{\it Gl}(#1,\R)}
\def\glR#1{{\frak gl}(#1,\R)}
\def\GlC#1{\mbox{\it Gl}(#1,\C)}
\def\glC#1{{\frak gl}(#1,\C)}
\def\Gl#1{\mbox{\it Gl}(#1)}
\def\Sp{\mbox{\it Sp}}
\def\ASp{\mbox{\it ASp}}

\def\ov{\overline}
\def\ot{\otimes}
\def\und{\underline}
\def\w{\wedge}
\def\ra{\rightarrow}
\def\lra{\longrightarrow}

\def\rk{\mbox{\rm rk}}
\def\mod{\mbox{ mod }}

\newcommand{\al}{\alpha}
\newcommand{\be}{\beta}
\newcommand{\ga}{\gamma}
\newcommand{\la}{\lambda}
\newcommand{\om}{\omega}
\newcommand{\Om}{\Omega}
\renewcommand{\th}{\theta}
\newcommand{\Th}{\Theta}

\newcommand{\lb}{\langle}
\newcommand{\rb}{\rangle}

\def\pair#1#2{\left<#1,#2\right>}
\def\big#1{\displaystyle{#1}}

\def\N{{\Bbb N}}
\def\Z{{\Bbb Z}}
\def\R{{\Bbb R}}
\def\C{{\Bbb C}}
\def\F{{\Bbb F}}
\def\CP{{\Bbb C} {\Bbb P}}
\def\HP{{\Bbb H} {\Bbb P}}
\def\P{{\Bbb P}}
\def\Q{{\Bbb Q}}
\def\H{{\Bbb H}}
\def\K{{\Bbb K}}

\def\FV{{\frak F}_V}

\def\so{{\frak {so}}}
\def\co{{\frak {co}}}
\def\sl{{\frak {sl}}}
\def\su{{\frak {su}}}
\def\sp{{\frak {sp}}}
\def\asp{{\frak {asp}}}
\def\csp{{\frak {csp}}}
\def\spin{{\frak {spin}}}
\def\stab{{\frak {stab}}}
\def\g{{\frak g}}
\def\h{{\frak h}}
\def\s{{\frak s}}
\def\k{{\frak k}}
\def\l{{\frak l}}
\def\m{{\frak m}}
\def\n{{\frak n}}
\def\t{{\frak t}}
\def\u{{\frak u}}
\def\z{{\frak z}}
\def\r{{\frak {rad}}}
\def\p{{\frak p}}
\def\L{{\frak L}}
\def\X{{\frak X}}
\def\gl{{\frak {gl}}}
\def\hol{{\frak {hol}}}
\renewcommand{\frak}{\mathfrak}
\renewcommand{\Bbb}{\mathbb}
\def\srk{\mbox{\rm srk}}

\def\hook{\mbox{}\begin{picture}(10,10)\put(1,0){\line(1,0){7}}
  \put(8,0){\line(0,1){7}}\end{picture}\mbox{}}
\newcommand{\cyclic}{\mathop{\kern0.9ex{{+}\kern-2.2ex\raise-.29ex%
      \hbox{\Large\hbox{$\circlearrowright$}}}}\limits}

\def\be{\begin{equation}}
\def\ee{\end{equation}}
\def\bi{\begin{enumerate}}
\def\ei{\end{enumerate}}
\def\ba{\begin{array}}
\def\ea{\end{array}}
\def\bea{\begin{eqnarray}}
\def\eea{\end{eqnarray}}
\def\ben{\begin{enumerate}}
\def\een{\end{enumerate}}
\def\linebr{\linebreak}

\title{Extrinsically Immersed Symplectic Symmetric Spaces}

\author{Tom Krantz, Lorenz J. Schwachh\"ofer\thanks{Both authors were supported by the
Schwerpunktprogramm Differentialgeometrie of the Deutsche
Forschungsgemeinschaft. The second author thanks the Max-Planck-Institut f\"ur Mathematik 
in den Naturwissenschaften in Leipzig, Germany, for its hospitality.}}

\date{ }

\maketitle

\begin{abstract}
Let $(V, \Om)$ be a symplectic vector space and let $\phi: M \ra V$ be a symplectic immersion. 
We show that $\phi(M) \subset V$ is locally an extrinsic symplectic symmetric space (e.s.s.s.) 
in the sense of \cite{CGRS} if and only if the second fundamental form of $\phi$ is parallel.

Furthermore, we show that any symmetric space which admits an immersion as an e.s.s.s. also 
admits a {\em full} such immersion, i.e., such that $\phi(M)$ is not contained in a proper affine 
subspace of $V$, and this immersion is unique up to affine equivalence.

Moreover, we show that any extrinsic symplectic immersion of $M$ factors through to the full
one by a symplectic reduction of the ambient space. In particular, this shows that the full 
immersion is characterized by having an ambient space $V$ of minimal dimension.
\end{abstract}

\section{Introduction}

Ever since their introduction by \'E. Cartan (\cite{Car}), symmetric spaces have been studied 
intensely from various viewpoints.
In \cite{Ferus}, D. Ferus introduced the notion of an {\em extrinsic symmetric space} which is 
a Riemannian symmetric space admitting an embedding into a Euclidean vector space 
such that the geodesic reflection at each $p \in M$ is the restriction of the reflection in the 
normal space of $M$ in $p$. In fact, Ferus gave a classification of all Riemannian extrinsic 
symmetric spaces, showing that most Riemannian symmetric spaces admit such an embedding.
Combining this with his results from \cite{Fe1}, he showed that a submanifold of Euclidean 
space is locally extrinsically symmetric if and only if its shape operator is parallel.

Evidently, the concept of extrinsic symmetric spaces may be generalized to other classes as well. 
See \cite{Kath, Kim} for results on extrinsic pseudo-Riemannian symmetric spaces and \cite{GS} 
for extrinsic CR-symmetric spaces. In both cases, many examples of such embeddings are given, 
but a classification appears to be out of reach.

In \cite{CGRS}, {\em extrinsic symplectic symmetric spaces} (e.s.s.s.) were considered for the 
first time, and their basic algebraic and geometric properties were described. However, in contrast 
to the aforementioned cases, only few types of e.s.s.s. are known, all of which have a $3$-step 
nilpotent transvection group.

In this article, we further investigate e.s.s.s. In fact, we slightly generalize this notion to {\em 
extrinsic symplectic immersions }by which we mean an immersion $\phi: M \ra V$ of a symmetric 
space $M$ into the symplectic vector space $(V, \Om)$ such that $\phi^*(\Om)$ is 
non-degenerate and such that for all $p \in M$
\[
\phi \circ s_p = \sigma_{T_p} \circ \phi,
\]
where $s_p: M \ra M$ is the symmetric reflection at $p$, and $\sigma_{T_p}: V \ra V$ is the 
reflection in the affine normal space of $T_p := d\phi(T_pM)$. If $\phi$ is an embedding, then 
$\phi(M) \subset V$ is an e.s.s.s. While we do not know of any example of an extrinsic 
symplectic immersion which is {\em not} an embedding, we nevertheless use this notion as it 
allows us to state our results more elegantly. As a first result, we obtain in analogy to the 
aforementioned result from \cite{Fe1}:

\

\noindent
{\bf Theorem~A} {\em Let $(V, \Om)$ be a symplectic vector space and let $\phi: M \ra V$ be a
symplectic immersion, i.e., such that $\phi^*(\Om)$ is non-degenerate.

Then the shape operator of $\phi$ is parallel if and only if there is an extrinsic symplectic 
immersion $\hat \phi: \hat M \ra V$ of a symmetric space $\hat M$ and a local diffeomorphism 
$\eps: M \ra \hat M$ such that $\phi = \hat \phi \circ \eps$.}

\

In contrast to the Riemannian case, e.s.s.s. which are intrinsically equivalent do not need to be 
extrinsically equivalent. For instance, a symplectic vector space, regarded as a flat symmetric 
space, can be embedded non-affinely into a higher-dimensional vector space as an e.s.s.s. 
(\cite{CGRS}). It is precisely this non-uniqueness which we want to illuminate.

An extrinsic symplectic immersion $\phi: M \ra V$ is called {\em full} if there is no proper affine 
subspace of $V$ containing $\phi(M)$.

\

\noindent
{\bf Theorem~B} {\em Let $M$ be a symmetric space admitting an extrinsic symplectic 
immersion. Then $M$ also admits a full extrinsic symplectic immersion which is unique up to 
affine equivalence.}

\

We point out that Theorem~B reveals a feature of e.s.s.s. which is not valid for pseudo-Riemannian 
extrinsic symmetric spaces. Indeed, there are full extrinsic pseudo-Riemannian symmetric spaces 
which are intrinsically equivalent but with ambient spaces of different dimensions; see e.g. 
examples 7.3 and 7.4 in \cite{Kath}.

Consider an extrinsic symplectic immersion $\phi: M \ra V$ into the symplectic space $(V, \Om)$, 
and let $V' \subset V$ be the smallest affine subspace containing $\phi(M)$. Assuming that $0 \in 
\phi(M)$ we may assume that $V'$ is a linear. Let $(V_0, \Om_0)$ be the symplectic reduction of 
$V'$, i.e., $V_0 := V'/N$ where $N \subset V'$ is the null space of $\Om|_{V'}$, and $\pi^*(\Om_0) 
= \Om|_{V'}$ for the canonical projection $\pi: V' \ra V_0$.

\

\noindent
{\bf Theorem C} {\em Let $\phi: M \ra V$ be an extrinsic symplectic immersion with $0 \in \phi(M)$, 
let $V' \subset V$, $V_0 = V'/N$ and $\pi: V' \ra V_0$ be as in the preceding paragraph. Then 
$\phi_0 := \pi \circ \phi: M \ra V_0$ is the full extrinsic symplectic immersion.}

\

\noindent
As an immediate consequence of Theorem~C we get

\

\noindent
{\bf Corollary D} {\em An extrinsic symplectic immersion $\phi: M \ra V$ is full if and only if the 
ambient space $V$ is of smallest possible dimension}.

\

This paper is structured as follows. After setting up our notation in section~\ref{sec:prelim}, we 
define in section~\ref{sec:immersions} the notion of an extrinsic symplectic immersion and discuss 
their relation to extrinsic symplectic morphisms of Lie algebras. In section~\ref{sec:parallel}, we 
prove Theorem~A and finally, section~\ref{sec:minimal} is devoted to the proofs of the remaining 
statements.

\section{Preliminaries} \label{sec:prelim}

\subsection{Symmetric spaces} \label{sec:symmetric-spaces}

We recall some basic facts on symmetric spaces which can be found e.g. in \cite{Hel}.

Let $G$ be a connected Lie group with Lie algebra $\g$, and let $\sigma: G \ra G$ be an 
involution, i.e., a Lie group homomorphism satisfying $\sigma^2 = Id$. Then $Fix(\sigma) := \{ g \in 
G \mid \sigma(g) = g\}$ is a closed subgroup of $G$ with Lie algebra $\k = \{ x \in \g \mid 
d\sigma(x) = x\}$. Let $K \subset G$ be a closed $\sigma$-invariant subgroup with Lie algebra 
$\k$ (i.e., the identity component of $K$ coincides with that of $Fix(\sigma)$). Then the coset 
space $M := G/K$ is a manifold. Since $\sigma: G \ra G$ is an involution, so is $d\sigma: \g \ra 
\g$, hence we may decompose
\be \label{eq:dsigma}
\g = \k \oplus \p,
\ee
where $d\sigma|_\k = Id$ and $d\sigma|_\p = - Id$, and hence we get for the Lie brackets
\be \label{eq:symmetric-brackets}
[\k, \k] \subset \k, \mbox{\hspace{1cm}} [\k, \p] \subset \p, \mbox{\hspace{1cm}} [\p, \p] 
\subset \k.
\ee
Conversely, given a direct sum decomposition $\g = \k \oplus \p$ satisfying 
(\ref{eq:symmetric-brackets}), then the map $d\sigma|_\k = Id$ and $d\sigma|_\p = - Id$ is a Lie 
algebra homomorphism, hence it is the differential of an involution $\sigma: G \ra G$, if $G$ is the 
simply connected Lie group with Lie algebra $\g$ and hence gives rise to a coset space $M 
= G/K$ as described above.

By definition of $K$, $\sigma$ induces an involution $s_0: M \ra M$ by $s_0(gK) := \sigma(g)K$. 
For $p = gK \in M$, we define the {\em symmetry at $p$} $s_p: M \ra M$ by $s_p := L_g \circ s_0 
\circ L_{g^{-1}}$, where $L_g: M \ra M$ denotes the canonical left action by $g \in G$. Then one 
easily verifies the well-definedness of $s_p$ as well as the following properties:
\bi
\item
$s_p^2 = Id_M$, and $p$ is an isolated fixed point of $s_p$ for all $p \in M$.
\item
$s_p \circ s_q \circ s_p = s_{s_p(q)}$ for all $p, q \in M$.
\ei

It is known that the group $\hat G := \langle s_p \mid p \in M \rangle \subset \mbox{\em Diff}(M)$ 
can be presented by $\{s_p \mid p \in M\}$ subject to these relations. Moreover, the identity 
component of $\hat G$ is known to be a Lie group, called the {\em transvection group of $M$}, 
and its Lie algebra is $[\p, \p] \oplus \p \lhd \g$.

A manifold with smooth maps $s_p$ for all $p \in M$ satisfying these relations is called a {\em 
symmetric space}. Thus, we have described above how a connected Lie group $G$ with an 
involution $\sigma: G \ra G$ induces the structure of a symmetric space on $M := G/K$ for a 
closed $\sigma$-invariant subgroup $K \subset G$ with Lie algebra $\k$.

This process can be reverted. Namely, if $M$ is a symmetric space, let $G$ be the (connected 
component of the) transvection group, and for a fixed $p_0 \in M$ define the involution
\[
\sigma:= AD_{s_{p_0}}: G \ra G,
\]
where $AD$ denotes conjugation in $G$. This is a well defined involution as $\sigma (s_p) = s_{s_{p_0}(p)} \in \hat G$ for all $p$. Since 
$p_0$ is an isolated fixed point of $s_{p_0}$, it follows that $K := Stab(p_0) \subset G$ is indeed 
a $\sigma$-invariant Lie subgroup with Lie algebra $\k$ from (\ref{eq:dsigma}) and hence fits the 
above description. Observe that if we assume $M = G/K$ to be simply connected, then $K$ is 
connected and hence $K = Fix(\sigma)_0$.

\subsection{The curvature of symplectic symmetric spaces} 

A symmetric pair $\g = \k \oplus \p$ is called {\em symplectic}, if there is an $ad_\k$-invariant 
symplectic form $\om$ on $\p$. Likewise, a symmetric space $M$ is called {\em symplectic}, if 
there is a symplectic form on $M$ which is invariant under all symmetries and hence under the 
transvection group.

It is elementary to show that the correspondence between simply connected symplectic 
symmetric spaces and symplectic symmetric pairs described in 
section~\ref{sec:symmetric-spaces} is a bijection. Our task shall be to investigate the curvature of 
a symplectic symmetric space and to give a criterion when such a symplectic symmetric space 
admits an extrinsic symplectic immersion.

If $\g = \k \oplus \p$ is a symplectic symmetric pair, then $(ad_\k)|_\p$ yields a homomorphism 
$ad: \k \ra \sp(\p, \om)$, and its {\em curvature} is defined as the map (cf. \cite[IV Thm. 4.2]{Hel})
\[
R : \Lambda^2 \p \longrightarrow \sp(\p, \om), \mbox{\hspace{1cm}} R(x, y) := - ad_{[x, y]}|_\p.
\]
The Jacobi identity of $\g$ is equivalent to the $ad_\k$-equivariance of $R$ and $R$ satisfying the 
{\em first Bianchi identity}, i.e., $R$ being contained in the {\em space of formal curvatures of 
$\sp(\p, \om)$}
\[
K(\sp(\p, \om)) = \{ R \in \Lambda^2 \p^* \ot \sp(\p, \om) \mid R(x, y) z + R(y, z) x + R(z, x) y = 0 
\mbox{ for all $x, y, z \in \p$}\}.
\]
Thus, there is a one-to-one correspondence between transvective symplectic symmetric pairs 
and elements $R \in K(\sp(\p, \om))$ such that $ad_k R = 0$ for all $k = R(x, y) \in \sp(\p, \om)$.

As a representation space of $\sp(\p, \om)$, $K(\sp(\p, \om))$ is isomorphic to 
$\Lambda^2(\sp(\p, \om))$ with an explicit isomorphism given by (\cite[Lemma 1.8]{CGRS})
\be \label{eq:identify-curvature}
R_{A_1 \w A_2}(x, y) := \frac12 ( (A_1 x) \circ (A_2 y) - (A_1 y) \circ (A_2 x)) \in S^2(\p) \cong 
\sp(\p, \om),
\ee
where the identification $S^2(\p) \cong \sp(\p, \om)$ is given by
\[
(x \circ y) \cdot z := \om(x, z) y + \om(y, z) x.
\]
Observe that our isomorphism differs from the one given in (\cite[Lemma 1.8]{CGRS}) by a factor 
$\frac12$, but this will be convenient later on in Lemma~\ref{lem:curvature}.

Thus, we can find an element
\be \label{eq:define-R}
{\cal R} = \sum_{i,j} a_{ij} A_i \w A_j \in \Lambda^2 \sp(\p, \om),
\ee
where $(a_{ij})$ is a non-degenerate skew-symmetric matrix and $A_i \in \sp(\p, \om)$ are linearly independent elements (which of course do not need to form a basis) such that
\begin{eqnarray*}
-ad_{[x, y]} & = & {\cal R}(x, y) := \sum_{i,j} a_{ij} R_{A_i \w A_j}(x, y)\\
& = & \frac12 \sum_{i,j} a_{ij} ( (A_i x) \circ (A_j y) - (A_i y) \circ (A_j x))\\
& = & \sum_{i,j} a_{ij} (A_i x) \circ (A_j y)
\end{eqnarray*}
for all $x, y \in \p$. We may write (\ref{eq:define-R}) also as
\[
{\cal R} = \sum_i A_i \w A^i, \mbox{ where } A^i := \sum_j a_{ij} A_j.
\]
The $(ad_\k)|_\p$-equivariance of ${\cal R}$ is equivalent to
\[
0 = ad_k {\cal R} = 2 \sum_{i,j} a_{ij} [ad_k, A_i] \w A_j = 2 \sum_i [ad_k, A_i] \w A^i
\]
for all $k \in \k$, which implies

\begin{prop}
Let $\k \subset \sp(\p, \om)$ and let ${\cal R} \in \Lambda^2 \sp(\p, \om) \cong K(\sp(\p, \om))$ be 
given as in (\ref{eq:define-R}). Then $\g := \k \oplus \p$ becomes a Lie algebra and hence a 
symmetric pair whose curvature is represented by ${\cal R}$ if and only if there are $\kappa_{ij} = 
\kappa_{ji} \in \k^*$ such that 
\be \label{eq:invariance-curvature}
[ad_k, A_i] = \sum_j \kappa_{ij}(k) A^j \mbox{ for all $k \in \k$}.
\ee
\end{prop}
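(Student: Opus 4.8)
The plan is to reduce the statement, via the facts recalled above, to a short bookkeeping argument in $\Lambda^2\sp(\p,\om)$. Equip $\g = \k\oplus\p$ with the brackets dictated by (\ref{eq:symmetric-brackets}): the restriction to $\k$ of the commutator bracket of $\sp(\p,\om)$, the natural action $[k,x] := ad_k(x)$ for $k\in\k$ and $x\in\p$, and $[x,y] := -{\cal R}(x,y)$ for $x,y\in\p$ (here $\k$ is assumed to be a subalgebra of $\sp(\p,\om)$ containing the image of ${\cal R}$, as is needed for these brackets to be defined). Then (\ref{eq:symmetric-brackets}) holds by construction, the curvature of $\g$ equals ${\cal R}$, and $\g$ is a Lie algebra precisely when the Jacobi identity holds. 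As recalled before the statement, the Jacobi identity is equivalent to ${\cal R}\in K(\sp(\p,\om))$ — true by hypothesis — together with the $ad_\k$-equivariance of ${\cal R}$, and the latter was shown above to be the same as $\sum_i[ad_k,A_i]\w A^i = 0$ in $\Lambda^2\sp(\p,\om)$ for every $k\in\k$. So everything comes down to deciding when this wedge vanishes.

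To that end I would establish the following elementary lemma: for arbitrary $B_i\in\sp(\p,\om)$ one has $\sum_i B_i\w A^i = 0$ if and only if $B_i = \sum_j c_{ij}A^j$ for some symmetric matrix $(c_{ij})$. Since $(a_{ij})$ is invertible and the $A_i$ are linearly independent, the $A^i = \sum_j a_{ij}A_j$ are linearly independent as well; complete them to a basis $\{A^i\}\cup\{e_1,\dots,e_m\}$ of $\sp(\p,\om)$ and write $B_i = \sum_j c_{ij}A^j + \sum_\alpha d_{i\alpha}e_\alpha$. Substituting and using bilinearity and antisymmetry of $\w$, the element $\sum_i B_i\w A^i$ has $e_\alpha\w A^i$-coefficient $d_{i\alpha}$, has $A^j\w A^i$-coefficient (for $j<i$) equal to $c_{ij} - c_{ji}$, and has no $e_\alpha\w e_\beta$-term; as these wedges belong to a basis of $\Lambda^2\sp(\p,\om)$, the element vanishes exactly when all $d_{i\alpha} = 0$ and $(c_{ij})$ is symmetric. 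This is the lemma.

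Finally, apply the lemma with $B_i := [ad_k,A_i]$, which depends linearly on $k\in\k$. If ${\cal R}$ is $ad_\k$-equivariant, then for each $k$ one obtains a symmetric matrix with $[ad_k,A_i] = \sum_j c_{ij}(k)A^j$; since the $A^j$ are linearly independent these coefficients are uniquely determined, hence linear in $k$, so $c_{ij} = \kappa_{ij}$ for uniquely determined $\kappa_{ij}\in\k^*$ with $\kappa_{ij} = \kappa_{ji}$. Conversely, given such symmetric $\kappa_{ij}$, one gets $\sum_i[ad_k,A_i]\w A^i = \sum_{i,j}\kappa_{ij}(k)\,A^j\w A^i = 0$ by the symmetry, so ${\cal R}$ is $ad_\k$-equivariant and $\g$ is a Lie algebra. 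I expect the only delicate point to be the bookkeeping in $\Lambda^2\sp(\p,\om)$ in the lemma — in particular, that the symmetry must be read off with respect to the basis $\{A^j\}$ rather than $\{A_j\}$, and that one has to rule out a priori any components of the $B_i$ transverse to $\mathrm{span}\{A^j\}$; everything else is formal.
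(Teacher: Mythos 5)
Your proof is correct and takes essentially the same route as the paper: the paper reduces the proposition to the computation $0 = ad_k {\cal R} = 2\sum_i [ad_k, A_i] \w A^i$ and then simply asserts that this "implies" the equivalence with (\ref{eq:invariance-curvature}), and your basis-completion lemma supplies precisely the linear algebra justifying that last step (the linear independence of the $A^j$, the vanishing of components transverse to $\mathrm{span}\{A^j\}$, and the fact that symmetry must be read off in the basis $\{A^j\}$, with linearity in $k$ giving $\kappa_{ij}\in\k^*$). No gaps.
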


\section{Extrinsic symplectic immersions} \label{sec:immersions}

In this section, we shall generalize the notion of an {\em extrinsic symplectic symmetric space} 
from \cite{CGRS} to extrinsic symplectic immersions. We begin by introducing some notation.

We define $ASp(V, \Om) = Sp(V, \Om) \ltimes V$ to be the group of affine symplectic 
transformations of $V$ and let
\[ 
Lin: ASp(V, \Om) \longrightarrow Sp(V, \Om)
\]
be the canonical projection $ASp(V) \ra ASp(V, \Om)/V \cong Sp(V, \Om)$ which is thus a 
homomorphism.

\begin{df} Let $(V, \Om)$ be a symplectic vector space, and let $(W, p)$ be a pair of a subspace 
$W \subset V$ such that $\Om|_W$ is non-degenerate, and a point $p \in V$. The {\em reflection 
in $(W, p)$} is the involution $\sigma_{(W, p)} \in ASp(V, \Om)$ given by
\[
\sigma_{(W, p)}(p) = p, \mbox{ } Lin(\sigma_{(W,p)})|_W = -Id_W, \mbox{Êand } 
Lin(\sigma_{(W,p)})|_{W^\perp} = Id_{W^\perp}.
\]
Here, $\mbox{}^\perp$ refers to the orthogonal complement w.r.t. $\Om$.
\end{df}

\noindent
Note that the definition of the reflections implies that for all $g \in ASp(V, \Om)$ we have
\be \label{eq:commute-reflections}
AD_g (\sigma_{(W, p)}) = \sigma_{(Lin(g) \cdot W, g \cdot p)}.
\ee

\begin{df}
Let $M$ be a symmetric space and $(V, \Om)$ a symplectic vector space. An {\em extrinsic 
symplectic immersion of $M$} is an immersion $\phi: M \ra V$ such that for all $p \in M$ we have
\bi
\item
$\phi$ is a {\em symplectic immersion}, i.e., $\phi^*(\Om)$ is non-degenerate or, equivalently, the 
restriction of $\Om$ to $T_p := d\phi_p(T_p M)$ is non-degenerate,
\item
$\phi \circ s_p = \sigma_{(T_p, \phi(p))} \circ \phi$.
\ei
We define $N_p := (T_p)^\perp$ to be the $\Om$-orthogonal complement, so that we have the 
decompositions $V = T_p \oplus N_p$ for all $p \in M$.
\end{df}
Thus, there is a well defined homomorphism
\be \label{eq:def-i-phi}
\imath_\phi: G \longrightarrow ASp(V, \Om), \mbox{ } \imath_\phi(s_p) := \sigma_{(T_p, \phi(p))},
\ee
where $G$ is the transvection group of $M$, since this definition respects the relations for $s_p$. 
Then evidently, we have the relation
\be \label{eq:induced-homom}
\imath_\phi(g) \cdot \phi(p) = \phi(g \cdot p) \mbox{ for all $g \in G$ and $p \in M$}.
\ee
It follows that
\begin{eqnarray*}
Lin(\imath_\phi(g))(T_p) & = & d(\imath_\phi(g) \circ \phi) (T_pM)\\
& = & d(\phi \circ L_g) (T_pM) \mbox{ by (\ref{eq:induced-homom})}\\
& = & d\phi (T_{g \cdot p}M) = T_{g \cdot p},
\end{eqnarray*}
where $L_g: M \ra M$ denotes the action of $g \in G$ on $M$. That is, (\ref{eq:induced-homom}) 
implies
\be \label{eq:relate-T_p's}
Lin(\imath_\phi(g))(T_p) = T_{g \cdot p} \mbox{ and } Lin(\imath_\phi(g))(N_p) = N_{g \cdot p} 
\mbox{ for all $g \in G$ and $p \in M$}.
\ee

Let us now write $M = G/K$ which induces the Lie algebra decomposition (\ref{eq:dsigma}), and 
suppose that $\phi: M \ra (V, \Om)$ is an extrinsic symplectic immersion with $\phi(p_0) = 0$ for 
$p_0 := eK \in M$. Then by (\ref{eq:induced-homom}) the diagram
\be \label{diagram}
\xymatrix{
G \ar[d]^{pr} \ar[r]^{\imath_\phi} & ASp(V, \Om) \ar[d]^{\pi_0}\\
M \ar[r]^{\phi} & V
}
\ee
commutes, where $pr: G \ra M = G/K$ and $\pi_0: ASp(V, \Om) \ra ASp(V, \Om)/Sp(V, \Om) \cong 
V$ are the canonical projections, i.e., $\pi_0(g) = g \cdot 0$ for all $g \in ASp(V, \Om)$. Moreover, 
we consider the $\Om$-orthogonal decomposition
\be \label{eq:decompose-V}
V = W_1 \oplus W_2,
\ee
where $W_1 := T_{p_0} = d\phi(T_{p_0}M)$ is a non-degenerate subspace by hypothesis. Let 
$\sigma_0 := \sigma_{(W_1,0)}$, so that $\sigma_0 = \imath_\phi(s_{p_0})$ and hence by 
(\ref{eq:def-i-phi})
\be \label{eq:commute-sigma}
\imath_\phi \circ \sigma = \imath_\phi \circ AD_{s_{p_0}} = AD_{\sigma_0} \circ \imath_\phi,
\ee
whose derivative reads
\be \label{eq:der-commute-sigma}
d\imath_\phi \circ d\sigma = Ad_{\sigma_0} \circ d\imath_\phi,
\ee
where $d\sigma: \g \ra \g$ is the involution with eigenspaces $\k$ and $\p$, and where $Ad$ 
denotes the differential of the conjugation map $AD$. If we decompose 
$d\imath_\phi = \Lambda + \tau: \g \ra \asp(V, \Om) = \sp(V, \Om) \oplus V$, then 
(\ref{eq:der-commute-sigma}) is equivalent to
\be \label{eq:der-commute-sigma2}
\Lambda \circ d\sigma = Ad_{\sigma_0} \circ \Lambda \mbox{\hspace{5mm} and \hspace{5mm}} 
\tau \circ d\sigma = \sigma_0 \circ \tau.
\ee
Since $AD_{\sigma_0}: Sp(V, \Om) \ra Sp(V, \Om)$ is an involution with differential 
$Ad_{\sigma_0}: \sp(V, \Om) \ra \sp(V, \Om)$, we get the corresponding symmetric space 
decomposition
\be \label{eq:define tilde k/p}
\sp(V, \Om) = \tilde \k \oplus \tilde \p, \mbox{ where } \ba{l} \tilde \k = \{ x \in \sp(V, \Om) \mid x W_i 
\subset W_i\} \mbox{ and }\\ \tilde \p = \{ x \in \sp(V, \Om) \mid x W_i \subset W_{i+1}\}, 
\mbox{ taking $i \mod 2$.}\ea
\ee

\begin{df} \label{def:realize-algebra}
Let $\g = \k \oplus \p$ be a symmetric pair and $(V, \Om)$ a symplectic vector space. We call a 
Lie algebra homomorphism
\[
d\imath: \g \longrightarrow \asp(V, \Om), d\imath(x) := \Lambda(x) + \tau(x) \in \sp(V, \Om) \oplus V
\]
an {\em extrinsic symplectic morphism of $\g$} if there is a symplectic orthogonal decomposition 
$V = W_1 \oplus W_2$ such that
\bi
\item
$\k = \ker(\tau)$ and $\tau: \p \ra W_1$ is a linear isomorphism,
\item
We have $\Lambda \circ d\sigma = Ad_{\sigma_0} \circ \Lambda$ for the involution $d\sigma: \g 
\ra \g$ and $Ad_{\sigma_0}: \sp(V, \Om) \ra \sp(V, \Om)$, or, equivalently, with the splitting from 
(\ref{eq:define tilde k/p})
\[
\Lambda(\k) \subset \tilde \k \mbox{ and } \Lambda(\p) \subset \tilde \p.
\]
\ei
\end{df}

\noindent
{\bf Remark.} The reader who is familiar with \cite{CGRS} will note that $d\imath = \Lambda + 
\tau$ is an extrinsic symplectic morphism of $\g$ if and only if $\Lambda$ satisfies the conditions 
in \cite[Lemma 1.5]{CGRS} when identifying $\p \cong W_1$ by the isomorphism $\tau$.

\

We now wish to describe how the notions of the extrinsic symplectic immersion of a symmetric 
space $M$ and extrinsic symplectic morphisms of a symmetric pair are related. 

\begin{thm} \label{thm:extrinsic-group-algebra} (cf. \cite[Lemmas 1.5 and 1.7]{CGRS})
\bi
\item
Let $M = G/K$ be a symmetric space and let $\g = \k \oplus \p$ be the corresponding symmetric 
pair. Let $\phi: M \ra (V, \Om)$ be an extrinsic symplectic immersion of $M$ with $\phi(eK) = 0$, 
and let $\imath_\phi: G \ra ASp(V, \Om)$ be the Lie group homomorphism from (\ref{eq:def-i-phi}).

Then $d\imath_\phi: \g \ra \asp(V, \Om)$ is an extrinsic symplectic morphism of the symmetric 
pair $\g = \k \oplus \p$.

\item
Conversely, let $\g = \k \oplus \p$ be a symmetric pair and let $d\imath: \g \ra \asp(V, \Om)$ be an 
extrinsic symplectic morphism.

Then there is a symmetric space $M = G/K$ corresponding to this symmetric pair and an injective 
extrinsic symmetric immersion $\phi: M \ra (V, \Om)$ such that $d\imath = d\imath_\phi$.
\ei
\end{thm}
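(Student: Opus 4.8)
The plan is to prove the two directions separately, treating the first as essentially a recollection of the computations already made in the excerpt, and the second as the genuine construction.

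For the first part, I would simply collect what has already been established. Given the extrinsic symplectic immersion $\phi$ with $\phi(eK)=0$, we have the homomorphism $\imath_\phi: G \ra ASp(V,\Om)$ from (\ref{eq:def-i-phi}), and its differential decomposes as $d\imath_\phi = \Lambda + \tau: \g \ra \sp(V,\Om) \oplus V$. The relation (\ref{eq:der-commute-sigma2}) is exactly the second condition in Definition~\ref{def:realize-algebra}, so it remains only to verify that $\k = \ker\tau$ and that $\tau: \p \ra W_1 = T_{p_0}$ is a linear isomorphism. For this, observe that the commuting diagram (\ref{diagram}) says $\pi_0 \circ \imath_\phi = \phi \circ pr$; differentiating at $e$ gives $\tau = d\pi_0 \circ d\imath_\phi = d\phi_{p_0} \circ d(pr)_e$ under the identification $T_0 V \cong V$. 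Since $d(pr)_e: \g \ra T_{p_0}M$ has kernel exactly $\k$ and restricts to an isomorphism $\p \xrightarrow{\sim} T_{p_0}M$, and since $d\phi_{p_0}: T_{p_0}M \ra V$ is injective with image $W_1$, the claim follows. Hence $d\imath_\phi$ is an extrinsic symplectic morphism.

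For the converse, the plan is: let $G$ be the simply connected Lie group with Lie algebra $\g$; since $d\sigma: \g \ra \g$ is the involution with fixed space $\k$, it integrates to an involution $\sigma: G \ra G$, and with $K := Fix(\sigma)_0$ we obtain the simply connected symmetric space $M = G/K$ corresponding to $\g = \k \oplus \p$. Next, the element $\sigma_0 := \sigma_{(W_1, 0)} \in ASp(V,\Om)$ satisfies $Ad_{\sigma_0} = $ the involution appearing in condition (ii) of Definition~\ref{def:realize-algebra}, so $d\imath: \g \ra \asp(V,\Om)$ is equivariant for $d\sigma$ and $Ad_{\sigma_0}$; since $G$ is simply connected, $d\imath$ integrates to a Lie group homomorphism $\imath: G \ra ASp(V,\Om)$ satisfying $\imath \circ \sigma = AD_{\sigma_0} \circ \imath$. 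Define $\phi: M \ra V$ by $\phi(gK) := \imath(g) \cdot 0$. Well-definedness requires $\imath(K) \cdot 0 = 0$: since $\Lambda(\k) \subset \tilde\k$ fixes $0$ and $\tau(\k) = 0$, we get $d\imath(\k) \subset \sp(V,\Om)$, so $\imath(K_0)$ fixes $0$, and connectedness of $K$ finishes this. One then checks $d\phi_{p_0} = \tau|_\p$ (again via the diagram, now read backwards), which is injective with image $W_1$ on which $\Om$ is non-degenerate; $G$-equivariance of $\phi$ then propagates non-degeneracy to all $T_p$, so $\phi$ is a symplectic immersion. The symmetry condition $\phi \circ s_p = \sigma_{(T_p, \phi(p))} \circ \phi$ follows at $p_0$ from $\imath(s_{p_0}) = \sigma_0$ (because $d\imath(s_{p_0}) = Ad_{\sigma_0}$ and both are involutions in the relevant component, or directly since $s_{p_0}$ acts on $\g$ as $d\sigma$ and $\sigma_0$ realizes $Ad_{\sigma_0}$ while fixing $0$), and at general $p = gK$ by conjugating with $\imath(g)$ using (\ref{eq:commute-reflections}) and (\ref{eq:relate-T_p's}). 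Finally injectivity of $\phi$: the stabilizer of $0$ under $\imath(G)$ has Lie algebra contained in $\Lambda^{-1}(\tilde\k) = \k$ (using $\tau$ injective on $\p$), so it is contained in $K$ up to components; one arranges $M$ so that $\phi$ is injective, e.g. by passing to the quotient of $G$ by $\ker\imath \cap (\text{the relevant subgroup})$, or simply notes that replacing $K$ by the full preimage of the stabilizer of $0$ yields an injective $\phi$ while keeping the same symmetric pair. Setting $d\imath_\phi := d(\pi_0\text{-free part}) = d\imath$ by construction completes the proof.

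The main obstacle I anticipate is the bookkeeping around injectivity and the precise choice of $K$: one must make sure that enlarging $K$ to force injectivity of $\phi$ does not change the Lie algebra $\k$ (it does not, since $\k$ is already $= \ker\tau$ and the stabilizer of $0$ meets the transvection part trivially off $\k$), and that $\imath$ still descends. A secondary subtlety is checking that $\imath(s_{p_0}) = \sigma_0$ as \emph{group} elements rather than merely having equal differentials — this uses that $s_{p_0}$ is an involution, that $\sigma_0$ is an involution, and that both lie in the identity-component-times-$\langle s_{p_0}\rangle$ structure of the transvection group, so the two-valued ambiguity is pinned down by the requirement $\imath(s_{p_0}) \cdot 0 = \phi(p_0) = 0$ together with $\imath(s_{p_0})$ acting as $-Id$ on $W_1$. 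Everything else is a routine unwinding of (\ref{eq:def-i-phi})--(\ref{eq:der-commute-sigma2}) and Definition~\ref{def:realize-algebra}.
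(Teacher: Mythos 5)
Your proposal is correct and follows essentially the same route as the paper: the first part is the same unwinding of the commuting diagram (\ref{diagram}), and the converse is the same integration argument on the simply connected group. The only substantive difference is that the paper avoids your injectivity bookkeeping entirely by defining $K := \imath^{-1}(Sp(V, \Om))$ from the outset (closed, $\sigma$-invariant by (\ref{eq:commute-sigma}), with Lie algebra $\k = \ker \tau$ — exactly the ``full preimage of the stabilizer of $0$'' you arrive at in your final paragraph), and it sidesteps your worry about $\imath(s_{p_0}) = \sigma_0$ as group elements by computing $\phi(s_p(q))$ directly from $s_p(q) = g\sigma(g^{-1}h)K$, using only the integrated relation $\imath \circ \sigma = AD_{\sigma_0} \circ \imath$ together with (\ref{eq:commute-reflections}) and (\ref{eq:relate-T_p's}).
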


\begin{proof}
Let $\phi: M \ra V$ be an extrinsic symplectic immersion of $M$ with $\phi(p_0) = 0$ where $p_0 
= eK \in G/K = M$, let $W_1 := T_{p_0} = d\phi(T_{p_0}M)$ and consider the splitting 
(\ref{eq:decompose-V}).

Since (\ref{diagram}) commutes, it follows that $\tau = d(\pi_0 \circ \imath_\phi)_e = 
d(\phi \circ pr)_e$, and since $\phi$ is an immersion, we have $\ker(\tau) = \ker(dpr_e) = \k$. 
Moreover, since $pr: G \ra M$ is a submersion, it follows that $Im(\tau) = 
d(\pi_0 \circ \imath_\phi)(\g) =  d\phi(T_{p_0}M) = W_1$, showing the first property in 
definition~\ref{def:realize-algebra}. The second follows immediately from 
(\ref{eq:der-commute-sigma2}).

To show the second statement, let $\g = \k \oplus \p$ and $G$ be the simply connected Lie group 
with Lie algebra $\g$, let $\sigma: G \ra G$ be the involution with differential $d\sigma$ and 
$\imath: G \ra ASp(V, \Om)$ the homomorphism with differential $d\imath: \g \ra \asp(V, \Om)$. By 
Definition~\ref{def:realize-algebra}, it follows that (\ref{eq:der-commute-sigma2}) holds as 
$\tau(\k) = 0$ and $\tau(\p) = W_1$. Since this is equivalent to (\ref{eq:der-commute-sigma}), 
integration yields that $\imath$ satisfies (\ref{eq:commute-sigma}).

We let $K := \imath^{-1}(Sp(V, \Om)) \subset G$ which is closed and has $\k$ as its Lie algebra 
since $x \in \k$ iff $d(\pi_0 \circ \imath)(x) = 0$. Moreover,  for $k \in K$ we have by 
(\ref{eq:commute-sigma}) $\imath(\sigma(k)) = AD_{\sigma_0} \imath(k) \in Sp(V, \Om)$, since 
$AD_{\sigma_0}$ leaves $Sp(V, \Om)$ invariant. Therefore, $K \subset G$ is a closed 
$\sigma$-invariant subgroup with Lie algebra $\k$, hence $M := G/K$ is a symmetric space 
corresponding to the symmetric pair $\g = \k \oplus \p$.

There is an induced map $\phi: M \ra ASp(V, \Om)/Sp(V, \Om) = V$ for which (\ref{diagram}) 
commutes with $\imath_\phi := \imath$, and $\phi$ is an injective immersion. The commutativity 
of (\ref{diagram}) implies that $\phi(gK) = (\phi \circ pr)(g) = \pi_0(\imath_\phi(g)) = \imath_\phi(g) 
\cdot 0 = \imath_\phi(g) \cdot \phi(eK)$ for all $g \in G$, so that (\ref{eq:induced-homom}) holds, 
which was shown to imply (\ref{eq:relate-T_p's}). Therefore, $T_{gK} = Lin_{\imath(g)}(T_{eK}) = 
Lin_{\imath(g)}(W_1)$, and since $\Om|_{W_1}$ is non-degenerate, $\Om|_{T_p}$ is 
non-degenerate for all $p \in M$, i.e., $\phi: M \ra V$ is a symplectic immersion.

For $g, h \in G$ and $p := gK$ and $q := hK \in M = G/K$ we have
\begin{eqnarray*}
\phi (s_p (q)) & = & \phi (g \sigma(g^{-1}h)K) = (\imath_\phi(g \sigma(g^{-1} h)))(0) 
\mbox{ by (\ref{diagram})}\\
& = & \imath_\phi(g) \cdot \imath_\phi(\sigma(g^{-1} h))(0) = \imath_\phi(g) \cdot 
AD_{\sigma_0}(\imath_\phi(g^{-1}h))(0) \mbox{ by (\ref{eq:commute-sigma})}\\
& = & \imath_\phi(g) \cdot \sigma_0 \cdot \imath_\phi(g)^{-1} \cdot \imath_\phi(h)(0) 
\mbox{ as $\sigma_0(0) = 0$}\\
& = & AD_{\imath_\phi(g)} (\sigma_{(T_{eK}, 0)}) \cdot \phi(q) \mbox{ by (\ref{diagram})}\\
& = & \sigma_{(Lin(\imath_\phi(g)) \cdot T_{eK}, \imath_\phi(g)(0))} \cdot \phi(q)  
\mbox{ by (\ref{eq:commute-reflections})}\\
& = & \sigma_{(T_p, \phi(p))} \cdot \phi(q) \mbox{ by (\ref{eq:relate-T_p's})},
\end{eqnarray*}
and hence,
\[
\phi \circ s_p = \sigma_{(T_p, \phi(p))} \circ \phi
\]
for all $p \in M$ which is the second property for $\phi$ being a symplectic immersion of $M$. 
\end{proof}

Note that if $\iota: (V_1, \Om_1) \ra (V_2, \Om_2)$ is a symplectic isomorphism and $\phi: M \ra 
V_1$ is an extrinsic symplectic immersion with associated homomorphism $\imath_\phi: G \ra 
ASp(V, \Om)$, then $\imath_{\iota \circ \phi} = AD_\iota \circ \imath_\phi$. Thus, from 
Theorem~\ref{thm:extrinsic-group-algebra} we immediately obtain

\begin{cor}
Let $M = G/K$ be a simply connected symmetric space with corresponding Lie algebra 
decomposition $\g = \k \oplus \p$. Then there is a one-to-one correspondence between affine 
equivalence classes of extrinsic symplectic immersions $\phi: M \ra (V, \Om)$ and conjugacy 
classes of extrinsic symplectic morphisms $d\imath: \g \ra \asp(V, \Om)$.
\end{cor}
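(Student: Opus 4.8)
The statement to be proved is the Corollary: a one-to-one correspondence between affine equivalence classes of extrinsic symplectic immersions $\phi: M \ra (V, \Om)$ and conjugacy classes of extrinsic symplectic morphisms $d\imath: \g \ra \asp(V, \Om)$. The plan is to build the two maps between these sets and check they are mutually inverse, using Theorem~\ref{thm:extrinsic-group-algebra} as the essential input together with the functoriality remark immediately preceding the Corollary. First I would fix, once and for all, a base point $p_0 = eK \in M$ and restrict attention to immersions normalized by $\phi(p_0) = 0$; since $M$ is simply connected, $G$ is the simply connected transvection group and $K = \mathrm{Stab}(p_0)$ is connected, so a symmetric pair $\g = \k \oplus \p$ determines $M$ uniquely and conversely. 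The normalization $\phi(p_0)=0$ costs nothing up to affine equivalence, because translating the ambient space carries one extrinsic symplectic immersion to another.

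\textbf{The two assignments.} In one direction, send an extrinsic symplectic immersion $\phi$ (normalized so $\phi(p_0) = 0$) to $d\imath_\phi: \g \ra \asp(V, \Om)$, where $\imath_\phi: G \ra ASp(V,\Om)$ is the homomorphism of (\ref{eq:def-i-phi}); the first part of Theorem~\ref{thm:extrinsic-group-algebra} guarantees $d\imath_\phi$ is an extrinsic symplectic morphism. In the other direction, send an extrinsic symplectic morphism $d\imath$ to the injective extrinsic symplectic immersion $\phi$ produced by the second part of Theorem~\ref{thm:extrinsic-group-algebra} (here one uses that $G$ is simply connected, so $d\imath$ integrates to a unique $\imath: G \ra ASp(V,\Om)$, and that $K = \imath^{-1}(Sp(V,\Om))$ recovers the given symmetric pair). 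I would then check these assignments descend to the quotient sets: if $\iota: (V_1, \Om_1) \ra (V_2, \Om_2)$ is a symplectic isomorphism, the remark before the Corollary gives $\imath_{\iota \circ \phi} = AD_\iota \circ \imath_\phi$, hence $d\imath_{\iota \circ \phi} = Ad_\iota \circ d\imath_\phi$, so affinely equivalent immersions (an affine symplectic isomorphism being a translation composed with $\iota \in Sp$, and translation not affecting the derivative after renormalizing the base point to $0$) go to conjugate morphisms; conversely conjugate morphisms integrate to conjugate group homomorphisms and thus to affinely equivalent immersions.

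\textbf{Mutual inverseness.} Starting from $d\imath$, the construction in Theorem~\ref{thm:extrinsic-group-algebra} yields $\phi$ with $d\imath_\phi = d\imath$ on the nose (this identity is stated there), so one composite is the identity. Starting from $\phi$ with $\phi(p_0) = 0$, forming $d\imath_\phi$ and then re-running the construction yields a homomorphism $\imath: G \ra ASp(V,\Om)$ with $d\imath = d\imath_\phi$; since $G$ is simply connected, $\imath = \imath_\phi$, and the induced map on $G/K$ is the original $\phi$ (the fibers $\imath^{-1}(Sp(V,\Om))$ agree because $K = \mathrm{Stab}(p_0) = \ker(\pi_0 \circ \imath_\phi)$, and (\ref{diagram}) then forces the induced maps to coincide). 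So the other composite is the identity on the normalized immersions, hence on affine equivalence classes.

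\textbf{Main obstacle.} The genuine content is all in Theorem~\ref{thm:extrinsic-group-algebra}, which is already proved; the remaining work in the Corollary is bookkeeping. The one point that needs care is the precise interaction between ``affine equivalence'' of immersions and ``conjugacy'' of morphisms: an affine symplectic transformation is a pair (linear part in $Sp(V,\Om)$, translation), and I must make sure that translating $\phi$ to restore $\phi(p_0) = 0$ does not change the conjugacy class of $d\imath_\phi$, and that every conjugation of $d\imath$ by an element of $Sp(V,\Om)$ is realized by a genuine affine symplectic change of ambient coordinates. This is where I would spell out that the base-point normalization is a harmless choice and that $AD_g$ for $g \in ASp(V,\Om)$ acts on the $\asp(V,\Om)$-valued morphism exactly by the adjoint action of $Sp(V,\Om)$ together with an inner translation that is absorbed by the normalization — a short but slightly fiddly verification, and the only place where one could go wrong.
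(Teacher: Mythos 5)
Your proposal is correct and follows essentially the same route as the paper: the paper's entire argument is to note the equivariance $\imath_{\iota \circ \phi} = AD_\iota \circ \imath_\phi$ and then invoke both directions of Theorem~\ref{thm:extrinsic-group-algebra}, exactly the two ingredients you organize into explicit mutually inverse assignments. The extra bookkeeping you supply (base-point normalization, integrating morphisms on the simply connected $G$, descent to equivalence classes) is precisely what the paper leaves implicit in the word ``immediately.''
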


If $\g = \k \oplus \p$ is a symmetric pair admitting an extrinsic symplectic morphism $d\imath = 
\Lambda + \tau: \g \ra \asp(V, \Om)$, then evidently, $\tau^*(\Om)$ is an $ad_\k$-invariant 
symplectic form on $\p$. Thus, for the splitting of the Lie algebra $\g$ from (\ref{eq:dsigma}) we 
can describe the curvature $R: \Lambda^2 \p \ra \k$ as in (\ref{eq:define-R}). In this language, 
\cite[Lemmas 1.5, 1.7]{CGRS} as well as the equivalence of equations (1.14) and (1.24) in \cite{CGRS} 
yield the following.

\begin{prop}
Let $\g = \k \oplus \p$ be a symplectic symmetric pair with the $ad_\k$-invariant symplectic form 
$\om \in \Lambda^2 \p^*$, let ${\cal R} = \sum_{i,j} a_{ij} A_i \w A_j \in \Lambda^2 \sp(\p, \om)$ 
be its curvature and let $\kappa_{ij} = \kappa_{ji} \in \k^*$ be as in (\ref{eq:invariance-curvature}).
Then $\g$ admits an extrinsic symplectic morphism if and only if
\[
\kappa_{ij}(R(x, y)) = \om((A_i A_j + A_j A_i) x, y) \mbox{ for all $x, y \in \p$ and all $i, j$}
\]
\end{prop}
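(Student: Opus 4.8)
The strategy is to reduce the statement to a direct computation with the structure constants of an extrinsic symplectic morphism $d\imath = \Lambda + \tau$, using $\tau$ to identify $\p$ with $W_1$. By Definition~\ref{def:realize-algebra} and the Remark following it, $\g$ admits an extrinsic symplectic morphism precisely when there is a map $\Lambda: \g \to \sp(V,\Om)$ with $\Lambda(\k) \subset \tilde\k$, $\Lambda(\p) \subset \tilde\p$, making $d\imath$ a Lie algebra homomorphism. I would first unravel what the homomorphism property says bracket by bracket on the three pieces $[\k,\k]$, $[\k,\p]$, $[\p,\p]$ of (\ref{eq:symmetric-brackets}). The $[\k,\k]$ and $[\k,\p]$ relations say that $\Lambda|_\k: \k \to \tilde\k$ is a representation and that $\tau: \p \to W_1$ is $\k$-equivariant, which is automatic once we transport $\om = \tau^*\Om$; the substantive relation is the one coming from $[\p,\p] \subset \k$, which forces $\Lambda|_\k$ composed with the curvature to be expressible through the ``off-diagonal'' blocks of the $\Lambda(\p)$-elements acting on $W_1$.

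\textbf{Key steps.} Write $\Lambda(A^i) \in \tilde\p$ for the images of the curvature generators $A^i \in \sp(\p,\om) \cong \sp(W_1)$ — more precisely, each $A_i \in \sp(\p,\om)$ must be realized as the $W_1 \to W_1$ block of some element $\Lambda(p_i) \in \tilde\p$ with $p_i \in \p$ corresponding to $A_i$ under a chosen correspondence; the off-diagonal block $W_1 \to W_2$ is then extra data. The homomorphism condition on $[\p,\p]$ reads $\Lambda([x,y]) = [\Lambda(x),\Lambda(y)] + (\text{terms from }\tau)$ in $\sp(V,\Om)$; projecting onto $\tilde\k$ and restricting to $W_1 = \p$ gives, on the one hand, $\Lambda([x,y])|_{W_1} = -ad_{[x,y]}|_\p = {\cal R}(x,y) = \sum_{i,j} a_{ij}(A_i x)\circ (A_j y)$ by the displayed formula for ${\cal R}$, and on the other hand the $W_1$-diagonal block of $[\Lambda(x),\Lambda(y)]$ plus a $\tau$-contribution. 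The point is that $[\tilde\p, \tilde\p] \subset \tilde\k$, and the diagonal block of the commutator of two off-diagonal-plus-diagonal symplectic endomorphisms, paired against $\om$, produces exactly the symmetrized product $\om((A_iA_j + A_jA_i)x, y)$ after one contracts with the coefficients $a_{ij}$. Meanwhile $\kappa_{ij}$ enters because, by (\ref{eq:invariance-curvature}), $[\Lambda(k), A_i] = \sum_j \kappa_{ij}(k) A^j$ records how $\k$ acts on the generators, so evaluating $\kappa_{ij}$ on $R(x,y) \in \k$ and feeding through the representation $\Lambda|_\k$ relates the left-hand side of the asserted identity to the same block computation. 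Equating the two expressions for the diagonal block gives the stated identity; conversely, if the identity holds one checks that the candidate $\Lambda$ built from the $A_i$ and the forced off-diagonal blocks does satisfy all three bracket relations, hence is an extrinsic symplectic morphism. This is where I would invoke the cited equivalence of (1.14) and (1.24) in \cite{CGRS}, which is presumably exactly this bookkeeping carried out in their notation.

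\textbf{Main obstacle.} The delicate part is the precise matching of conventions: the factor $\tfrac12$ by which the curvature identification (\ref{eq:identify-curvature}) was rescaled relative to \cite{CGRS}, the identification $S^2(\p) \cong \sp(\p,\om)$ via $(x\circ y)\cdot z = \om(x,z)y + \om(y,z)x$, and the sign in $R(x,y) = -ad_{[x,y]}|_\p$ all have to be threaded through the commutator-of-blocks computation so that the symmetrized product $A_iA_j + A_jA_i$ (and not, say, $2A_iA_j$ or an antisymmetrized version) appears with coefficient exactly $1$. I expect that once the correspondence $p_i \leftrightarrow A_i$ and the block decomposition $\sp(V,\Om) = \tilde\k \oplus \tilde\p$ are fixed, the identity is forced by a short symmetric-bilinear-form manipulation — the genuinely new content over \cite{CGRS} being only the translation from the ``immersion'' picture to the intrinsic curvature data ${\cal R}$ and $\kappa_{ij}$, which the preceding results in this section have already set up. Hence I would keep the proof brief, citing \cite[Lemmas 1.5, 1.7]{CGRS} and the equivalence of their equations (1.14) and (1.24) for the core computation and merely indicating how the conventions align.
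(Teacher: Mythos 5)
The paper itself offers no computation here: it deduces the proposition directly from \cite[Lemmas 1.5, 1.7]{CGRS} and the equivalence of equations (1.14) and (1.24) there, which is exactly what you propose to do in your closing paragraph, so at that level your plan coincides with the paper's. However, the computational sketch in your ``Key steps'' contains two concrete missteps that would derail the calculation if you carried it out instead of citing it.

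First, your realization of the generators $A_i$ is wrong. You propose to obtain $A_i$ as ``the $W_1 \to W_1$ block of some element $\Lambda(p_i) \in \tilde\p$ with $p_i \in \p$.'' But by the definition (\ref{eq:define tilde k/p}) of $\tilde\p$, its elements interchange $W_1$ and $W_2$, so their diagonal blocks vanish identically; moreover the $A_i$ are not indexed by $\p$ at all. The correct mechanism is the map ${\cal A}: W_2 \ra \sp(\p, \om)$ of (\ref{eq:define-C}), $\tau({\cal A}(\xi)x) = \Lambda(x)\xi$: the generators are $A_i = {\cal A}(e_i)$ for a basis $\{e_i\}$ of the normal space $W_2$, with $a_{ij} = \Om^{ij}$ (Lemma~\ref{lem:curvature}). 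Second, you locate the substantive condition in the $W_1$-block of the $[\p,\p]$ relation, but that block yields no condition: $[\Lambda(x), \Lambda(y)]|_{W_1} = \Lambda([x,y])|_{W_1} \cong ad_{[x,y]}|_\p$ is precisely the statement that ${\cal R} = \sum \Om^{ij} R_{A_i \w A_j}$ represents the curvature, i.e.\ Lemma~\ref{lem:curvature} again. The identity of the proposition lives in the $W_2$-block: one computes $\Om([\Lambda(x), \Lambda(y)]e_i, e_j) = \om(A_j x, A_i y) - \om(A_j y, A_i x) = -\om((A_iA_j + A_jA_i)x, y)$, while the $\k$-equivariance of ${\cal A}$ (Lemma~\ref{lem:C-equivariant}) together with (\ref{eq:invariance-curvature}) gives $\Om(\Lambda(k)e_i, e_j) = \kappa_{ij}(k)$; equating the two for $k = [x,y]$, with $R(x,y) = -ad_{[x,y]}|_\p$, is exactly the stated condition. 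Your concerns about signs and the factor $\tfrac12$ are well taken, but the block bookkeeping has to be corrected before they become relevant.
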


\section{Parallelity of the second fundamental form} \label{sec:parallel}

Let $\phi: M \ra (V, \Om)$ be a symplectic immersion of a manifold $M$ into a symplectic vector 
space, i.e., $\phi^*(\Om)$ is non-degenerate. Since $T_p := d\phi(T_pM) \subset V$ is a 
non-degenerate subspace, we obtain the $\Om$-orthogonal splitting $V = T_p \oplus N_p$ for 
all $p \in M$. Thus, we have the pullback vector bundles
\[
{\cal T} := \{ (p, v) \in M \times V \mid v \in T_p \} \mbox{ and } {\cal N} := \{ (p, v) \in M \times V \mid 
v \in N_p \}.
\]
Evidently, ${\cal T} = TM$ is the tangent bundle of $M$ as $\phi$ is an immersion, and ${\cal N} 
\ra M$ is called the {\em normal bundle of $\phi$}. For the direct sum of these vector bundles, we 
have
\be \label{eq:splitting-VB}
M \times V = {\cal T}Ê\oplus {\cal N}.
\ee
The canonical flat connection $\mathring \nabla$ on $V$ induces connections on $TM$ and 
${\cal N}$ by the definitions
\[
\nabla^{\cal T}_XY := (\mathring\nabla_XY)_{\cal T} \mbox{\hspace{3mm}Êand \hspace{3mm}}
\nabla^{\cal N}_X\xi := (\mathring\nabla_X\xi)_{\cal N},
\]
where the subscripts refer to the components of the vector bundle splitting (\ref{eq:splitting-VB}). 
Moreover, the {\em second fundamental formÊ} and the {\em shape operator of $\phi$ }are 
defined as
\begin{equation}
\alpha(X,Y) := (\mathring\nabla_XY)_{\cal N} \mbox{\hspace{3mm}Êand \hspace{3mm}} 
A_\xi X :=  (\mathring\nabla_X\xi)_{\cal T},
\end{equation}
which are related by the identity
\be \label{eq:shape-2nd}
\Om(\al(X, Y), \xi) = \Om(A_\xi X, Y).
\ee
Since $\al$ is symmetric, it follows that $A_\xi \in \sp(T_p, \Om)$.

\

\noindent {\bf Proof of Theorem A}.
If $\phi: M \ra V$ is an extrinsic symplectic immersion, then $s_p: M \ra M$ can be extended to 
bundle maps $s_p: TM \ra TM$ and $s_p: {\cal N} \ra {\cal N}$ in a canonical way, and these 
maps preserve the connections $\nabla^{\cal T}$ and $\nabla^{\cal N}$. Therefore, for all $X, Y, 
Z \in T_p$ we have
\[
s_p((\nabla_X \al)(Y, Z)) = (\nabla_{s_p(X)} \al)(s_p Y, s_p Z) =  (\nabla_{-X} \al)(- Y, - Z) = - 
(\nabla_X \al)(Y, Z).
\]
On the other hand, $s_p((\nabla_X \al)(Y, Z)) = (\nabla_X \al)(Y, Z)$ as $(\nabla_X \al)(Y, Z) \in 
N_p$, so that
\linebreak
$(\nabla_X \al)(Y, Z) = 0$ and hence, $\al$ is parallel. This evidently also holds if $M \ra \hat M 
\ra V$ is the composition of a local diffeomorphism and an extrinsic symplectic immersion.

Conversely, let $\phi: M \ra (V, \Om)$ be a symplectic immersion such that $\al$ is parallel. Fix 
$p_0 \in M$ and assume w.l.o.g. that $\phi(p_0) = 0$. We consider the $\Om$-orthogonal 
decomposition $V = W_1 \oplus W_2$ with $W_1 := d\phi_{p_0}(T_{p_0}M)$. As in  \cite{Fe1}, 
we define the map
\[
\Lambda: W_1 \longrightarrow \sp(V, \Om), \mbox{\hspace{5mm}} \Lambda(x) (y + \xi) = A_\xi x + 
\alpha(x,y),
\]
where $x, y \in W_1$ and $\xi \in W_2$. By (\ref{eq:shape-2nd}) and the symmetry of $\al$, we 
have indeed $\Lambda(x) \in \sp(V, \Om)$, and in fact, $\Lambda(x) \in \tilde \p$ with the splitting 
$\sp(V, \Om) = \tilde \k \oplus \tilde \p$ from (\ref{eq:define tilde k/p}). Now we can apply verbatim 
the proof in \cite{Fe1} of the corresponding statement for Riemannian immersions to the present 
case to see that
\[
\g := \{R(x, y) \mid x, y \in W_1\} \oplus \{ \Lambda(x) + x \mid x \in W_1\} =: \k \oplus \p \subset \asp(V, \Om)
\]
is a Lie subalgebra and hence, the inclusion map $d\imath: \g \hookrightarrow \asp(V, \Om)$ is an extrinsic symplectic morphism. By Theorem~\ref{thm:extrinsic-group-algebra}, there is an injective extrinsic symplectic 
immersion
\[
\hat \phi: \hat M \longrightarrow V
\]
of a symmetric space $\hat M$ with $d\imath_{\hat \phi} = d\imath$, and $\hat \phi(\hat p_0) = 0 = 
\phi(p_0)$ and $d\hat \phi(T_{p_0}\hat M) = W_1 = d\phi(T_{p_0}M)$ for some $\hat p_0 \in \hat 
M$. Therefore, the second fundamental form of $\phi$ and $\hat \phi$ at $0 \in V$ coincide, and 
since both have parallel second fundamental form, it follows that their second fundamental forms 
coincide and hence, $\phi(M) \subset \hat \phi(\hat M)$. As $\hat \phi$ is injective, it follows that 
$\phi = \hat \phi \circ \eps$ for some map $\eps: M \ra \hat M$, and clearly, $\eps$ is a local 
diffeomorphism.
{\hfill \rule{.5em}{1em}\mbox{}\bigskip}

\section{Full extrinsic symplectic immersions} \label{sec:minimal}

Suppose that $d\imath = \Lambda + \tau: \g \ra \asp(V, \Om)$ is an extrinsic symplectic morphism, 
and let $V = W_1 \oplus W_2$ be the $\Om$-orthogonal decomposition with $W_1 = \tau(\p)$, 
and we let $\om := \tau^*(\Om)$ be the symplectic form on $\p$ induced by this morphism. We 
define the map
\be \label{eq:define-C}
{\cal A} : W_2 \longrightarrow \sp(\p, \om), \mbox{\hspace{1cm}} \tau({\cal A}(\xi) (x)) := 
\Lambda(x) \xi \in W_1.
\ee
Indeed, ${\cal A}(\xi) \in \sp(\p, \om)$ as
\begin{eqnarray*}
\om({\cal A}(\xi)x, y) - \om({\cal A}(\xi)y, x) & = & (\tau^*\Om)(\tau^{-1}Ê\Lambda(x) \xi, y) - 
(\tau^*\Om)(\tau^{-1} \Lambda(y) \xi, x)\\
& = & \Om(\Lambda(x) \xi, \tau(y)) - \Om(\Lambda(y) \xi, \tau(x))\\
& = & \Om(\Lambda(x) \tau(y) - \Lambda(y) \tau(x), \xi) = 0,
\end{eqnarray*}
since $\sp(V, \Om) \ni [\Lambda(x) + \tau(x), \Lambda(y) + \tau(y)] = [\Lambda(x), \Lambda(y)] + 
(\Lambda(x) \tau(y) - \Lambda(y) \tau(x))$, so that $\Lambda(x) \tau(y) - \Lambda(y) \tau(x) = 0$ 
for all $x, y \in \p$.

\begin{lem} \label{lem:C-equivariant}
The map ${\cal A}: W_2 \ra \sp(\p, \om)$ from (\ref{eq:define-C}) is $\k$-equivariant; i.e., for all $k 
\in \k$ and $\xi \in W_2$, the following identity holds:
\[
{\cal A}(\Lambda(k) \xi) = [k, {\cal A}(\xi)].
\]
\end{lem}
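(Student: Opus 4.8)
The plan is to verify the claimed identity by a direct computation, transporting everything back to $V$ via $\tau$ and exploiting the fact that $d\imath = \Lambda + \tau$ is a Lie algebra homomorphism. The defining relation of ${\cal A}$ reads $\tau({\cal A}(\xi)(x)) = \Lambda(x)\xi$ for $x \in \p$, $\xi \in W_2$. So to prove ${\cal A}(\Lambda(k)\xi) = [k, {\cal A}(\xi)]$ for $k \in \k$, it suffices to show that $\tau$ applied to both sides agrees when evaluated on an arbitrary $x \in \p$; since $\tau|_\p$ is injective (it is an isomorphism onto $W_1$), this gives the claim. Thus the target is the identity
\[
\Lambda(x)(\Lambda(k)\xi) = \tau\big([k, {\cal A}(\xi)](x)\big) \quad \text{for all } x \in \p,\ k \in \k,\ \xi \in W_2.
\]

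First I would expand the right-hand side: $[k,{\cal A}(\xi)](x) = k \cdot {\cal A}(\xi)(x) - {\cal A}(\xi)(k \cdot x)$, where $k$ acts on $\p$ via $ad_k = \Lambda(k)|_{W_1}$ under the identification $\tau$. Applying $\tau$ and using $\tau \circ ad_k = \Lambda(k) \circ \tau$ on $W_1$ (which follows from $\Lambda(\k) \subset \tilde\k$, i.e. $\Lambda(k)$ preserves $W_1$, together with $d\imath$ being a homomorphism so that $[\Lambda(k),\tau(x)] = \tau([k,x])$), I get
\[
\tau\big([k,{\cal A}(\xi)](x)\big) = \Lambda(k)\big(\tau({\cal A}(\xi)(x))\big) - \tau({\cal A}(\xi)(k\cdot x)) = \Lambda(k)(\Lambda(x)\xi) - \Lambda(k\cdot x)\,\xi,
\]
using the defining relation for ${\cal A}$ twice. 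So the identity to prove becomes $\Lambda(x)\Lambda(k)\xi = \Lambda(k)\Lambda(x)\xi - \Lambda([k,x])\,\xi$, that is, $\Lambda([k,x])\,\xi = [\Lambda(k),\Lambda(x)]\,\xi$.

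The remaining point is therefore to establish $\Lambda([k,x]) = [\Lambda(k),\Lambda(x)]$ as endomorphisms of $W_2$, for $k \in \k$, $x \in \p$. This is where the homomorphism property of $d\imath$ comes in once more: since $d\imath = \Lambda + \tau$ is a Lie algebra homomorphism into $\asp(V,\Om) = \sp(V,\Om) \ltimes V$, bracketing $\Lambda(k) + \tau(k) = \Lambda(k)$ (as $k \in \k = \ker\tau$) with $\Lambda(x) + \tau(x)$ gives, comparing the $\sp(V,\Om)$-components, $\Lambda([k,x]) = [\Lambda(k),\Lambda(x)]$ in $\sp(V,\Om)$ — hence in particular on the subspace $W_2$ (note $[k,x] \in \p$, so $\tau([k,x]) \in W_1$ contributes only to the translation part, not the linear part). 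I expect this last bracket comparison, and the careful bookkeeping of which terms land in $W_1$ versus $W_2$ and which in the linear versus translational part of $\asp(V,\Om)$, to be the only real subtlety; the rest is unwinding definitions. A clean way to organize the write-up is to first record the auxiliary identities $\tau \circ ad_k = \Lambda(k) \circ \tau$ on $\p$ and $\Lambda([k,x]) = [\Lambda(k),\Lambda(x)]$ as consequences of $d\imath$ being a homomorphism, and then perform the short chain of substitutions above.
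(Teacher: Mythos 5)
Your proposal is correct and follows essentially the same route as the paper: both reduce the claim, via injectivity of $\tau|_\p$, to the chain of identities $\Lambda(x)\Lambda(k)\xi = \Lambda(k)\Lambda(x)\xi - [\Lambda(k),\Lambda(x)]\xi$ combined with $\Lambda(k)\circ\tau = \tau\circ ad_k$ on $\p$ and $[\Lambda(k),\Lambda(x)] = \Lambda([k,x])$, all consequences of $d\imath$ being a Lie algebra homomorphism. The only difference is cosmetic (you compute from the right-hand side, the paper from the left), and your explicit bookkeeping of linear versus translational parts is a correct elaboration of what the paper leaves implicit.
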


\begin{proof}
This follows from the following calculation for $x \in \p$, $k \in \k$ and $\xi \in W_2$:
\begin{eqnarray*}
\tau({\cal A}(\Lambda(k) \xi)(x)) & = & \Lambda(x) \Lambda(k) \xi = \Lambda(k) \Lambda(x) \xi - 
[\Lambda(k), \Lambda(x)] \xi\\
& = & \Lambda(k) \tau({\cal A}(\xi) x) - \Lambda(k\ x) \xi = \tau(k\ {\cal A}(\xi) x) - \tau({\cal A}(\xi) 
k\ x)\\
& = & \tau([k, {\cal A}(\xi)] (x)).
\end{eqnarray*}
\vspace{-15mm}

\end{proof}

\begin{lem} \label{lem:curvature} {\em (cf. \cite[(1.26)]{CGRS})}
Let $d\imath: \g \ra \asp(V, \Om)$ be an extrinsic symplectic morphism, let $V = W_1 \oplus W_2$ 
and ${\cal A} : W_2 \ra \sp(\p, \om)$ be as above. Let $\Lambda^2{\cal A}: \Lambda^2 W_2 \ra 
\Lambda^2 \sp(\p, \om)$ be the canonical extension. Then the curvature ${\cal R}$ of $\g = \k 
\oplus \p$ is represented by 
\[
{\cal R} = (\Lambda^2{\cal A})(\Om|_{W_2})^{-1} \in \Lambda^2(\sp(\p, \om))
\]
with the identification from (\ref{eq:identify-curvature}) and where $(\Om|_{W_2})^{-1} \in 
\Lambda^2 W_2$ is the inverse of $\Om|_{W_2} \in \Lambda^2 W_2^*$. That is, if $\{ e_i \}$ is a 
basis of $W_2$ such that $\Om_{ij}:= \Om(e_i, e_j)$ has the inverse matrix $\Om^{ij}$, and if 
$A_i := {\cal A}(e_i)$, then
\[
{\cal R} = \sum_{i,j} \Om^{ij} R_{A_i \w A_j}.
\]
\end{lem}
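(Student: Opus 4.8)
The plan is to compute directly the curvature of the symmetric pair $\g = \k \oplus \p$ built from the extrinsic symplectic morphism $d\imath = \Lambda + \tau$, and to match it against the right-hand side. Recall that the curvature is defined by $R(x,y) = -\,ad_{[x,y]}|_\p$ for $x,y \in \p$, where the bracket is taken in $\g$. Since $d\imath$ is a Lie algebra homomorphism into $\asp(V,\Om)$ and $\tau: \p \ra W_1$ is an isomorphism, I would first express $[x,y]_\g \in \k$ through $\Lambda$: we have $\Lambda([x,y]_\g) = [\Lambda(x),\Lambda(y)]$ because, as noted just before Lemma~\ref{lem:C-equivariant}, the $\tau$-components of $[\Lambda(x)+\tau(x),\Lambda(y)+\tau(y)]$ cancel. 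Hence for $z \in \p$,
\[
\tau(R(x,y)z) = -\,\tau(ad_{[x,y]_\g} z) = -\,\Lambda([x,y]_\g)\,\tau(z) = -\,[\Lambda(x),\Lambda(y)]\,\tau(z),
\]
using that $d\imath$ is a homomorphism and that $\Lambda(\k)$ acts on $\p \cong W_1$ exactly as $ad_\k$ does via $\tau$.

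\textbf{Key steps.} Next I would evaluate the right-hand side on the same arguments. Writing $v := \tau(z) \in W_1$ and decomposing any element of $V$ along $W_1 \oplus W_2$, the definition (\ref{eq:define-C}) says $\tau({\cal A}(\xi)x) = \Lambda(x)\xi$ for $\xi \in W_2$, while $\Lambda(\p) \subset \tilde\p$ sends $W_1$ to $W_2$ and $W_2$ to $W_1$. With $\{e_i\}$ a basis of $W_2$, $\Om_{ij} = \Om(e_i,e_j)$, inverse $\Om^{ij}$, and $A_i = {\cal A}(e_i)$, I must show
\[
\sum_{i,j} \Om^{ij}\, R_{A_i \w A_j}(x,y)\,z = -\,\tau^{-1}\!\big([\Lambda(x),\Lambda(y)]\,\tau(z)\big)
\]
for all $x,y,z \in \p$. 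Using the formula $R_{A_i \w A_j}(x,y) = \tfrac12\big((A_i x)\circ(A_j y) - (A_i y)\circ(A_j x)\big)$ from (\ref{eq:identify-curvature}) together with $(u \circ w)\cdot z = \om(u,z)w + \om(w,z)u$, the left side becomes an explicit sum in the $\om(A_i x, z)$, etc. The crucial bookkeeping identity I would invoke is that, for $v \in W_1$, one can expand $\Lambda(x)v \in W_2$ in the basis $\{e_i\}$ with coefficients governed by $\Om^{ij}$ and $\Om(\Lambda(x)v, e_j) = \Om(\Lambda(x)\tau(z_0)\ldots)$; concretely $\Lambda(x)\tau(z) = \sum_{i,j}\Om^{ij}\,\Om(\Lambda(x)\tau(z), e_j)\,e_i$, and by the $\sp$-property of $\Lambda(x)$, $\Om(\Lambda(x)\tau(z),e_j) = -\Om(\tau(z),\Lambda(x)e_j) = -\Om(\tau(z),\tau(A_j x)) = -\om(z, A_j x) = \om(A_j x, z)$ using skew-symmetry of $A_j$. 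Feeding this, together with the analogous expansion applied once more to compute $\Lambda(y)\big(\Lambda(x)\tau(z)\big)$, reduces both sides to the same quadratic expression; the factor $\tfrac12$ in our normalization of (\ref{eq:identify-curvature}) is exactly what makes the antisymmetrization in $R_{A_i\w A_j}$ come out right, which is the reason the remark after (\ref{eq:identify-curvature}) flagged this choice.

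\textbf{Main obstacle.} The only real work is the index chase in the last step: one must carefully push the two applications of $\Lambda(x),\Lambda(y)$ (which swap $W_1 \leftrightarrow W_2$) through the basis expansions, keep track of the raising/lowering by $\Om_{ij}$ and $\Om^{ij}$, and verify that the terms organize into precisely $\tfrac12\sum\Om^{ij}\big((A_ix)\circ(A_jy)-(A_iy)\circ(A_jx)\big)$ rather than something off by a sign or a factor. I expect no conceptual difficulty beyond this, since $\k$-equivariance of ${\cal A}$ (Lemma~\ref{lem:C-equivariant}) guarantees the resulting ${\cal R}$ is automatically $ad_\k$-invariant, so it genuinely represents the curvature of $\g$; thus once the pointwise identity $\tau(R(x,y)z) = -[\Lambda(x),\Lambda(y)]\tau(z)$ is matched with the claimed sum, the lemma follows.
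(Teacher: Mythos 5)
Your overall strategy is the right one, and in fact it is essentially the only available one: the paper itself gives \emph{no} proof of Lemma~\ref{lem:curvature}, deferring entirely to \cite[(1.26)]{CGRS}, so your direct verification is a genuine addition rather than a rederivation. Your two structural reductions are correct: from $d\imath$ being a homomorphism one gets $\Lambda([x,y])=[\Lambda(x),\Lambda(y)]$ for $x,y\in\p$ (the translation parts cancel, as noted before Lemma~\ref{lem:C-equivariant}) and $\tau([k,z])=\Lambda(k)\tau(z)$ for $k\in\k$, $z\in\p$, which together give the key identity $\tau(R(x,y)z)=-[\Lambda(x),\Lambda(y)]\tau(z)$. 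The index chase you defer does close up: using $\Lambda(y)e_l=\tau(A_ly)$ and the $\sp$-property one finds $\Lambda(y)\tau(z)=-\sum_{k,l}\Om^{kl}\om(A_ly,z)e_k$, hence $R(x,y)z=\sum_{k,l}\Om^{kl}\bigl(\om(A_ly,z)A_kx-\om(A_lx,z)A_ky\bigr)$, and the antisymmetry of $\Om^{ij}$ collapses the four terms of $\tfrac12\sum\Om^{ij}\bigl((A_ix)\circ(A_jy)-(A_iy)\circ(A_jx)\bigr)z$ pairwise so that the factor $\tfrac12$ cancels and the two expressions agree, exactly as you predicted.

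One concrete slip to fix: your basis expansion $\Lambda(x)\tau(z)=\sum_{i,j}\Om^{ij}\,\Om(\Lambda(x)\tau(z),e_j)\,e_i$ has the wrong sign. With $\Om_{ij}=\Om(e_i,e_j)$ and $\Om^{ij}$ its genuine inverse matrix, the correct expansion of $w\in W_2$ is $w=\sum_{i,j}\Om^{ij}\,\Om(e_j,w)\,e_i=-\sum_{i,j}\Om^{ij}\,\Om(w,e_j)\,e_i$. Carried through literally, your version would produce $-\mathcal{R}$ instead of $\mathcal{R}$ — precisely the kind of ``off by a sign'' outcome you flagged as the main risk. Since the subsequent steps in your outline are otherwise sound, correcting this one formula makes the whole argument go through.
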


Once again, observe that in our convention the identification $\Lambda^2(\sp(V, \Om)) \cong 
K(\sp(V, \om))$ differs from that in (\cite{CGRS}) by a factor $\frac12$.


\begin{df} Let $\g = \k \oplus \p$ be a symplectic symmetric pair, and let ${\cal R} \in 
\Lambda^2(\sp(\p, \om))$ be the element representing the curvature of this space. Then
\[
\srk(\g, \k) := \rk ({\cal R})
\]
is called the {\em symplectic curvature rank}. Here, we consider the rank of ${\cal R}$ regarded 
as a $2$-form.
\end{df}

Note that because of Lemma~\ref{lem:curvature} we have $\dim W_2 \geq \srk(\g, \k)$, and 
hence $\dim V \geq \dim \p + \srk(\g, \k)$ for any extrinsic symplectic morphism.

Let $V' \subset V$ be a linear subspace. We define the Lie group
\[
Stab(V') := \{ g \in ASp(V, \Om) \mid g \cdot V' \subset V'\},
\]
whose Lie algebra is given as
\[
\stab(V') := \{ A + v \in \asp(V, \Om) = \sp(V, \Om) \oplus V \mid v \in V' \mbox{ and } A \cdot V' 
\subset V'\}.
\]
We call an extrinsic symplectic immersion $\phi: M \ra V$  {\em full} if there is no proper affine 
subspace $V' \subsetneq V$ such that $\phi(M) \subset V'$. Likewise, we call an extrinsic 
symplectic morphism $d\imath: \g \ra \asp(V, \Om)$ {\em full}, if there is no proper linear 
subspace $V' \subsetneq V$ such that $d\imath(\g) \subset \stab(V') \subset \asp(V, \Om)$. 
Evidently, $\phi$ is full if and only if $d\imath_\phi$ is.

\begin{prop} \label{prop:minimal}
Let $\g = \k \oplus \p$ be a symmetric pair, $V = W_1 \oplus W_2$ a symplectic vector space, and 
let $d\imath = \Lambda + \tau: \g \ra \asp(V, \Om)$ be an extrinsic symplectic morphism. Consider 
the map ${\cal A}: W_2 \ra \sp(\p, \om)$ from (\ref{eq:define-C}). Then the following are 
equivalent.
\bi
\item
$d\imath$ is full.
\item
$\dim(W_2) = \srk(\g, \k)$, i.e., $\dim V = \dim \p + \srk(\g, \k)$.
\item
${\cal A}$ is injective.
\item
$W_2 = span\{\Lambda(x) \tau(y) \mid x, y \in \p\}$.
\ei
\end{prop}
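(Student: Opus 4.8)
The plan is to prove the chain of equivalences by establishing a cycle, using the concrete description of ${\cal A}$ and Lemma~\ref{lem:curvature} as the main tools. I would first dispose of the easy equivalences among the last three items. The equivalence ``${\cal A}$ is injective $\Leftrightarrow$ $W_2 = \mathrm{span}\{\Lambda(x)\tau(y) \mid x,y \in \p\}$'' is pure linear algebra: by definition $\tau({\cal A}(\xi)(x)) = \Lambda(x)\xi$, so $\xi \in \ker({\cal A})$ iff $\Lambda(x)\xi = 0$ for all $x \in \p$, i.e. iff $\Om(\Lambda(x)\xi, \tau(y)) = 0$ for all $x,y$ (since $\tau$ maps onto $W_1$ and $\Om|_{W_1}$ is non-degenerate, while $\Lambda(x)\xi \in W_1$). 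But $\Om(\Lambda(x)\xi,\tau(y)) = -\Om(\Lambda(x)\tau(y),\xi)$ because $\Lambda(x) \in \sp(V,\Om)$, so $\ker({\cal A})$ is exactly the $\Om$-orthogonal complement within $W_2$ of $\mathrm{span}\{\Lambda(x)\tau(y)\}$; since $\Om|_{W_2}$ is non-degenerate, ${\cal A}$ is injective iff that span is all of $W_2$. For the equivalence with the dimension statement: Lemma~\ref{lem:curvature} gives ${\cal R} = \sum_{i,j}\Om^{ij}R_{A_i\wedge A_j}$ where $A_i = {\cal A}(e_i)$ and $\{e_i\}$ is a basis of $W_2$. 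Under the isomorphism $\Lambda^2(\sp(\p,\om)) \cong K(\sp(\p,\om))$, which is injective, we have $\rk({\cal R})$ as a $2$-form equal to $\rk$ of $(\Lambda^2{\cal A})(\Om|_{W_2})^{-1}$; since $(\Om|_{W_2})^{-1}$ is a non-degenerate element of $\Lambda^2 W_2$, its rank is $\dim W_2$, and pushing it forward by ${\cal A}$ preserves the rank exactly when ${\cal A}$ is injective, dropping it otherwise. Hence $\srk(\g,\k) = \rk({\cal R}) \le \dim W_2$ with equality iff ${\cal A}$ is injective; combined with $\dim V = \dim W_1 + \dim W_2 = \dim\p + \dim W_2$, this gives the equivalence of the second and third items.

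It remains to tie fullness into this circle; I would prove ``$d\imath$ not full $\Rightarrow$ ${\cal A}$ not injective'' and ``${\cal A}$ not injective $\Rightarrow$ $d\imath$ not full''. For the first direction, suppose $d\imath(\g) \subset \stab(V')$ for a proper linear subspace $V' \subsetneq V$. Since $\tau(\p) = W_1 \subset V'$ (the translation parts $\tau(x)$ lie in $V'$ by definition of $\stab(V')$, and they span $W_1$), and since $\Lambda(x)$ preserves $V'$ for every $x \in \p$, we get $\Lambda(x)\tau(y) \in V'$, so $\mathrm{span}\{\Lambda(x)\tau(y)\} \subset V'$; together with $W_1 \subset V'$ this forces $W_1 + \mathrm{span}\{\Lambda(x)\tau(y) \mid x,y\in\p\} \subset V' \subsetneq V$, so $\mathrm{span}\{\Lambda(x)\tau(y)\}$ cannot be all of $W_2$, and by the equivalence already established ${\cal A}$ is not injective. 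For the converse, assume ${\cal A}$ is not injective, so by the linear-algebra step $W_1 \oplus \mathrm{span}\{\Lambda(x)\tau(y)\} =: V'$ is a proper subspace of $V$. I claim $d\imath(\g) \subset \stab(V')$. The translation parts: $\tau(\p) = W_1 \subset V'$ and $\tau(\k) = 0 \subset V'$, so all translation parts lie in $V'$. The linear parts: I must check $\Lambda(x)V' \subset V'$ and $\Lambda(k)V' \subset V'$ for $x \in \p$, $k \in \k$.

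The nontrivial point, and the step I expect to be the main obstacle, is verifying that $V' = W_1 \oplus \mathrm{span}\{\Lambda(x)\tau(y)\mid x,y\in\p\}$ is invariant under all the $\Lambda(z)$, $z \in \g$. For $\Lambda(z)W_1$: we have $\Lambda(z)\tau(y) \in W_1 \oplus \mathrm{span}\{\ldots\}$ trivially (indeed $\Lambda(x)\tau(y)$ is by definition in that span for $x \in \p$; for $\Lambda(k)\tau(y) = \tau(k\cdot y) \in W_1$ using that $d\imath$ is a Lie algebra homomorphism and $\k = \ker\tau$ acts on $\p$), so $\Lambda(z)W_1 \subset V'$. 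For $\Lambda(z)$ applied to a generator $\Lambda(x)\tau(y) \in W_2$ with $x,y \in \p$: write $\Lambda(z)\Lambda(x)\tau(y) = \Lambda(x)\Lambda(z)\tau(y) + [\Lambda(z),\Lambda(x)]\tau(y)$. Now $[\Lambda(z),\Lambda(x)] = \Lambda([z,x]) - (\text{translation-type correction})$ — more precisely, from $[d\imath(z), d\imath(x)] = d\imath([z,x])$ and splitting into $\sp(V,\Om)$ and $V$ parts one reads off $[\Lambda(z),\Lambda(x)] = \Lambda([z,x]_\k)$ when $z,x\in\p$ (the bracket $[\p,\p]\subset\k$), and $[\Lambda(z),\Lambda(x)] = \Lambda([z,x])$ when one of them is in $\k$. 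In all cases $[\Lambda(z),\Lambda(x)]$ is of the form $\Lambda(w)$ for suitable $w \in \g$, and $\Lambda(z)\tau(y) \in V' $ as shown, so applying $\Lambda(x)$ (or the relevant $\Lambda$) keeps us in the span of things of the form $\Lambda(\cdot)\tau(\cdot)$ plus $W_1$ — one needs to be slightly careful that $\Lambda(x)$ applied to an element of $W_1$ lands in $\mathrm{span}\{\Lambda(x)\tau(y)\} \subset V'$ (which holds since any element of $W_1$ is $\tau(y)$ for some $y\in\p$, and $\Lambda(x)\tau(y)$ is exactly a generator), while $\Lambda(x)$ applied to $\Lambda(z)\tau(y) \in W_2$ stays in $W_2 \cap V' = \mathrm{span}\{\Lambda(x)\tau(y)\}$ — here it helps to use that $\Lambda(\p) \subset \tilde\p$ swaps $W_1 \leftrightarrow W_2$ and $\Lambda(\k) \subset \tilde\k$ preserves each, together with the $\k$-equivariance of ${\cal A}$ from Lemma~\ref{lem:C-equivariant} to see that $\Lambda(k)$ preserves $\mathrm{span}\{\Lambda(x)\tau(y)\}$. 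Assembling these case checks gives $\Lambda(z)V' \subset V'$ for all $z$, hence $d\imath(\g) \subset \stab(V')$ with $V' \subsetneq V$, so $d\imath$ is not full. This closes the cycle and proves the proposition.
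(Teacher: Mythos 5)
Your proposal is correct and follows essentially the same route as the paper: the orthogonality computation identifying $\ker({\cal A})$ with $(W_2')^\perp$ for the third/fourth items, Lemma~\ref{lem:curvature} and the rank of $(\Lambda^2{\cal A})(\Om|_{W_2})^{-1}$ for the second/third, and the invariance of $W_1 \oplus W_2'$ under $d\imath(\g)$ for the link to fullness (the paper verifies the $\k$-invariance of $W_2'$ by the direct commutator identity $\Lambda(k)\Lambda(x)\tau(y) = \Lambda(k\,x)\tau(y) + \Lambda(x)\tau(k\,y)$, one of the two options you mention). One small slip that does not affect the argument: for $x \in \p$ the map $\Lambda(x)$ sends $W_2'$ into $W_1 \subset V'$, not into $W_2 \cap V'$, and $[\Lambda(z),\Lambda(x)] = \Lambda([z,x])$ holds with no correction term since $\tau([z,x]) = \Lambda(z)\tau(x) - \Lambda(x)\tau(z)$ accounts for the translation part.
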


\begin{proof}
Since $(\Lambda^2{\cal A})(\Om|_{W_2})^{-1} = {\cal R}$ by Lemma~\ref{lem:curvature}, it follows 
that $\rk({\cal A}) \geq \rk({\cal R}) = \srk(\g, \k)$. Thus, if $\dim W_2 = \srk(\g, \k)$, then ${\cal A}$ 
must be injective.

Conversely, if ${\cal A}$ is injective, then $\rk((\Lambda^2{\cal A})(\Om|_{W_2})^{-1}) = \dim W_2$ 
since $\Om|_{W_2}$ is non-de\-ge\-ner\-ate. Thus, $\dim W_2 = \rk({\cal R}) = \srk(\g, \k)$ by 
Lemma~\ref{lem:curvature}, which shows that the second and third statement are equivalent.

Let $W_2' := span\{\Lambda(x) \tau(y) \mid x, y \in \p\} \subset W_2$. Then $\xi \in (W_2')^\perp$ 
if and only if $0 = \Om(\xi, \Lambda(x) \tau(y)) = -\Om(\Lambda(x) \xi, \tau(y)) = 
-\Om(\tau({\cal A}(\xi) x), \tau(y)) = -\om({\cal A}(\xi) x, y)$ for all $x, y \in \p$, using the definition of 
${\cal A}$ in (\ref{eq:define-C}), which implies that $(W_2')^\perp = \ker({\cal A})$, and this shows 
the equivalence of the third and fourth statement.

Let $V' \subset V$ be such that $d\imath(\g) \subset \stab(V') \subset \asp(V, \Om)$. Since 
$\tau(\p) = W_1$, it follows that $W_1 \subset V'$. Thus, $[d\imath(x), \tau(y)] = \Lambda(x) \tau(y) 
\in V'$ for all $x, y \in \p$, i.e., $W_1 \oplus W_2' \subset V'$. If $W_2' = W_2$, then we must have 
$V' = V$ which shows that the fourth statement implies the first.

On the other hand, $W_1 \oplus W_2'$ is invariant under $\Lambda(x)$ for all $x \in \p$ by 
definition, and $W_2'$ is invariant under $\Lambda(k)$ for $k \in \k$, since
\begin{eqnarray*}
\Lambda(k) (\Lambda(x) \tau(y)) & = & [\Lambda(k), \Lambda(x)] \tau(y) + \Lambda(x) \Lambda(k) 
\tau(y) = \Lambda(k\ x) \tau(y) + \Lambda(x) \tau(k\ y) \in W_2'.
\end{eqnarray*}
Thus, $d\imath(\g) \subset \stab(W_1 \oplus W_2')$, so that the first statement implies the fourth.
\end{proof}

\begin{prop} \label{prop:full-equivalent}
Let $\g = \k \oplus \p$ be a symmetric pair with two full extrinsic symplectic morphisms $d\imath_i 
= \Lambda_i + \tau_i: \g \ra \asp(V_i, \Om_i)$ such that $\tau_1^*(\Om_1) = \tau_2^*(\Om_2)$. 
Then $d\imath_1$ and $d\imath_2$ are affinely equivalent, i.e., there is a linear symplectic 
isomorphism $\iota: (V_1, \Om_1) \ra (V_2, \Om_2)$ such that $d\imath_2 = Ad_\iota \circ 
d\imath_1$.
\end{prop}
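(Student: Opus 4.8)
The plan is to build the symplectic isomorphism $\iota: V_1 \to V_2$ by gluing together an isomorphism on the $W_1$-part and one on the $W_2$-part, using the common form $\om := \tau_1^*(\Om_1) = \tau_2^*(\Om_2)$ on $\p$ as the bridge. First I would set up the $\Om_i$-orthogonal decompositions $V_i = W_1^{(i)} \oplus W_2^{(i)}$ with $W_1^{(i)} = \tau_i(\p)$, and define $\iota_1 := \tau_2 \circ \tau_1^{-1}: W_1^{(1)} \to W_1^{(2)}$; by hypothesis on the pulled-back forms, $\iota_1$ is a symplectomorphism onto $(W_1^{(2)}, \Om_2|_{W_1^{(2)}})$. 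For the $W_2$-part I would use Proposition~\ref{prop:minimal}: since both morphisms are full, the maps ${\cal A}_i: W_2^{(i)} \to \sp(\p,\om)$ are injective, and by Lemma~\ref{lem:curvature} their images both contain the ``curvature data'' needed to reconstruct ${\cal R}$. The key point is that fullness forces ${\cal A}_i$ to have the same image in $\sp(\p,\om)$ — namely $W_2' := \mathrm{span}\{{\cal A}_i(\xi) \mid \xi\}$, which I claim is intrinsic to $(\g,\k)$ — so I may define $\iota_2 := {\cal A}_2^{-1} \circ {\cal A}_1: W_2^{(1)} \to W_2^{(2)}$.

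The next step is to check that $\iota_2$ is a symplectomorphism for the forms $\Om_i|_{W_2^{(i)}}$. This is where Lemma~\ref{lem:curvature} does the real work: the curvature ${\cal R} = (\Lambda^2{\cal A}_i)(\Om_i|_{W_2^{(i)}})^{-1}$ is the same element of $\Lambda^2\sp(\p,\om)$ for both $i$, and since ${\cal A}_i$ is injective, the pairing $\Om_i|_{W_2^{(i)}}$ is determined by ${\cal R}$ via ${\cal A}_i$; concretely, $(\Om_1|_{W_2^{(1)}})^{-1}$ pushes forward under $\Lambda^2{\cal A}_1$ to ${\cal R}$, and likewise for $2$, so ${\cal A}_2 \circ \iota_2 = {\cal A}_1$ forces $\iota_2^*(\Om_2|_{W_2^{(2)}}) = \Om_1|_{W_2^{(1)}}$. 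Setting $\iota := \iota_1 \oplus \iota_2: V_1 \to V_2$, the fact that each $W_2^{(i)}$ is $\Om_i$-orthogonal to $W_1^{(i)}$ makes $\iota$ a linear symplectic isomorphism.

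Finally I would verify $d\imath_2 = Ad_\iota \circ d\imath_1$, i.e. $\tau_2 = \iota \circ \tau_1$ and $\Lambda_2(x) = \iota \circ \Lambda_1(x) \circ \iota^{-1}$ for all $x \in \p$. The first is immediate from the definition $\iota_1 = \tau_2\tau_1^{-1}$. For the second, I would check the equality separately on $W_1^{(1)}$ and on $W_2^{(1)}$. On $W_1^{(1)} = \tau_1(\p)$: for $y \in \p$, $\Lambda_i(x)\tau_i(y) = \tau_i({\cal A}_i$-type expression$)$ — more precisely one uses $\Lambda_i(x)\tau_i(y) = \tau_i(R(x,y)$-related term$)$ coming from $[\Lambda_i(x)+\tau_i(x),\Lambda_i(y)+\tau_i(y)] \in \sp$, together with $\Lambda_i(x)$ acting on $W_1^{(i)}$ via the shape operator built from the common curvature ${\cal R}$, so the actions match under $\iota_1$. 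On $W_2^{(1)}$: by definition of ${\cal A}_i$ we have $\Lambda_i(x)\xi = \tau_i({\cal A}_i(\xi)x)$ for $\xi \in W_2^{(i)}$, and applying this with $\xi$ and $\iota_2(\xi)$ and using ${\cal A}_2(\iota_2(\xi)) = {\cal A}_1(\xi)$ and $\tau_2 = \iota_1\tau_1$ gives exactly $\Lambda_2(x)\iota_2(\xi) = \iota_1(\Lambda_1(x)\xi)$. The main obstacle I anticipate is verifying that $W_2'$, the image of ${\cal A}_i$ inside $\sp(\p,\om)$, is genuinely the same subspace for both morphisms and that $\Lambda_i(x)$ acts on $W_1^{(i)}$ in a way dictated purely by the curvature ${\cal R}$; both of these ultimately follow from Lemmas~\ref{lem:C-equivariant} and \ref{lem:curvature} and the formula $\Lambda(x)\tau(y) = \tau(R(x,y)\cdot(-))$-type identities, but assembling them cleanly is the crux.
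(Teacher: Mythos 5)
Your construction of $\iota = \iota_1 \oplus \iota_2$ with $\iota_1 = \tau_2\circ\tau_1^{-1}$ and $\iota_2 = {\cal A}_2^{-1}\circ{\cal A}_1$, the use of Proposition~\ref{prop:minimal} for injectivity of the ${\cal A}_i$ and of Lemma~\ref{lem:curvature} both for the equality of images and for $\iota_2$ being symplectic, is exactly the paper's argument, and your verification of $\iota\Lambda_1(x)\iota^{-1} = \Lambda_2(x)$ on $W_2^{(1)}$ via $\Lambda_i(x)\xi = \tau_i({\cal A}_i(\xi)x)$ is correct and in fact cleaner than the paper's pairing computation. But there are two problems. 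First, your treatment of the action of $\Lambda_i(x)$ on $W_1^{(i)}$ is wrong as stated: $\Lambda_i(x)\tau_i(y)$ lies in $W_2^{(i)}$, not in $W_1^{(i)} = \tau_i(\p)$, so it cannot equal $\tau_i$ of anything, and there is no ``shape operator determined by ${\cal R}$'' identity of the form you sketch. The correct (and easy) repair is that $\iota\Lambda_1(x)\iota^{-1}$ and $\Lambda_2(x)$ both lie in $\sp(V_2,\Om_2)$ and both swap $W_1^{(2)}$ and $W_2^{(2)}$, so agreement on $W_2^{(2)}$ (which you have) forces agreement on $W_1^{(2)}$ by skew-adjointness: $\Om_2(A\,\tau_2(y),\xi) = -\Om_2(\tau_2(y), A\xi)$ with $\Om_2|_{W_2^{(2)}}$ non-degenerate.

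Second, and more seriously, you only verify $d\imath_2 = Ad_\iota\circ d\imath_1$ on $\p$ and never on $\k$. This is not automatic: the proposition is stated for an arbitrary symmetric pair, where $\k$ need not equal $[\p,\p]$, so intertwining on $\p$ does not propagate to all of $\g$. The paper spends half its proof on this case, and it needs two separate inputs you never deploy: on $W_1^{(2)}$ one uses $\Lambda_i(k)\tau_i(x) = \tau_i(k\cdot x)$ (from the vanishing of the translational part of $[d\imath_i(k), d\imath_i(x)]$), and on $W_2^{(2)}$ one needs the $\k$-equivariance ${\cal A}_i(\Lambda_i(k)\xi) = [k,{\cal A}_i(\xi)]$ of Lemma~\ref{lem:C-equivariant}, which combined with ${\cal A}_2\circ\iota_2 = {\cal A}_1$ and injectivity of ${\cal A}_2$ gives $\iota_2\Lambda_1(k)\iota_2^{-1} = \Lambda_2(k)|_{W_2^{(2)}}$. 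Without this the proof is incomplete.
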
 

\begin{proof}
Consider the $\Om_i$-orthogonal decompositions $V_i = W_1^i \oplus W_2^i$ where $\tau_i: \p 
\ra W_1^i$ is an isomorphism for $i = 1,2$. Then ${\cal A}_i: W_2^i \ra \sp(\p, \om)$ is injective for 
$i = 1,2$ by Proposition~\ref{prop:minimal} and moreover, the images ${\cal A}_i(W_2^i) \subset 
\sp(\p, \om)$ coincide by Lemma~\ref{lem:curvature}. Thus, we may define the linear 
isomorphisms 
\[
\iota_1 := \tau_2 \circ \tau_1^{-1}: W_1^1 \longrightarrow W_1^2, \mbox{ and }Ê\iota_2: 
{\cal A}_2^{-1} \circ {\cal A}_1: W_2^1 \longrightarrow W_2^2.
\]
By our hypothesis, $\tau_i^*(\Om_i) = \om$. Moreover, by Lemma~\ref{lem:curvature} 
$({\cal A}_i^{-1})^*(\Om_i^{-1}) = {\cal R} \in \Lambda^2(\sp(V, \om)$. Thus, 
$(\iota_i)^*(\Om_2|_{W_i^2}) = \Om_1|_{W_i^1}$ for $i = 1,2$, i.e.,
\[
\iota := \iota_1 \oplus \iota_2: (V_1, \Om_1) \longrightarrow (V_2, \Om_2)
\]
is a symplectic isomorphism. Since $\tau_2 = \iota_1 \circ \tau_1$ and ${\cal A}_1 = {\cal A}_2 
\circ \iota_2$, we have for $x, y \in \p$ and $\xi \in W_2^2$
\begin{eqnarray*}
\Om_2(\iota \Lambda_1(x) \iota^{-1} \xi, \tau_2(y)) & = & (\iota^* \Om_2)(\Lambda_1(x) 
\iota_2^{-1} \xi, \iota_1^{-1} \tau_2(y))\\
& = & \Om_1(\tau_1 ({\cal A}_1(\iota_2^{-1}(\xi)) x), \tau_1 (y))\\
& = & (\tau_1^* \Om_1)({\cal A}_2(\xi) x, y) = (\tau_2^* \Om_2)({\cal A}_2(\xi) x, y)\\
& = & \Om_2(\tau_2({\cal A}_2(\xi) x), \tau_2 (y))\\
& = & \Om_2(\Lambda_2(x) \xi, \tau_2 (y)),
\end{eqnarray*}
and since $\iota \Lambda_1(x) \iota^{-1}(W_i^2) \subset W_{i+1}^2$ and $\Lambda_2(x)(W_i^2) 
\subset W_{i+1}^2$ taking indices $\mod 2$, it follows that $\iota \Lambda_1(x) \iota^{-1} = 
\Lambda_2(x)$ for all $x \in \p$. Similarly, we have for $k \in \k$ and $x, y \in \p$
\begin{eqnarray*}
\Om_2(\iota \Lambda_1(k) \iota^{-1} \tau_2(x), \tau_2(y)) & = & 
(\iota^* \Om_2)(\Lambda_1(k) \iota_1^{-1} \tau_2(x), \iota_1^{-1} \tau_2(y))\\
& = & \Om_1(\Lambda_1(k) \tau_1(x), \tau_1(y))\\
& = & \Om_1(\tau_1(k\ x), \tau_1(y))\\
& = & (\tau_1^* \Om_1)(k\ x, y) = (\tau_2^* \Om_2)(k\ x, y)\\
& = & \Om_2(\tau_2(k\ x), \tau_2(y))\\
& = & \Om_2(\Lambda_2(k) \tau_2(x), \tau_2(y)),
\end{eqnarray*}
and since $\iota \Lambda_1(k) \iota^{-1} (W_1^2) \subset W_1^2$ and $\Lambda_2(k) (W_1^2) 
\subset W_1^2$, it follows that $\iota \Lambda_1(k) \iota^{-1}|_{W_1^2} = \Lambda_2|_{W_1^2}$.
Finally, for $k \in \k$ and $\xi \in W_2^2$ we have by Lemma~\ref{lem:C-equivariant}
\begin{eqnarray*}
\iota \Lambda_1(k) \iota^{-1} \xi & = & \iota_2 \Lambda_1(k) \iota_2^{-1} \xi\\
& = & {\cal A}_2^{-1} {\cal A}_1(\Lambda_1(k) (\iota_2^{-1}(\xi))\\
& = & {\cal A}_2^{-1} [k, {\cal A}_1(\iota_2^{-1}(\xi))]\\
& = & {\cal A}_2^{-1} [k, {\cal A}_2(\xi)]\\
& = & {\cal A}_2^{-1} {\cal A}_2 (\Lambda_2(k)(\xi)) = \Lambda_2(k)(\xi),
\end{eqnarray*}
so that $\iota \Lambda_1(k) \iota^{-1}|_{W_2^2} = \Lambda_2|_{W_2^2}$, which completes 
showing that
\[
\Lambda_2 = AD_\iota \circ \Lambda_1 \mbox{ and } \tau_2 = \iota \circ \tau_1,
\]
and therefore, $d\imath_2 = AD_\iota \circ d\imath_1$.
\end{proof}

\noindent {\bf Proof of Theorem~C.} Let $\phi: M \ra V$ be an extrinsic symplectic immersion with 
$0 \in \phi(M)$, let $V' \subset V$ be the smallest subspace containing $\phi(M)$, and let $N 
\subset V'$ be the nullspace of $\Om|_{V'}$. Then $V_0 := V'/N$ has an induced symplectic 
structure $\Om_0$ such that $\pi^*(\Om_0) = \Om|_{V'}$, where $\pi: V' \ra V_0$ is the canonical 
projection.

Since $T_p \subset V'$ for all $p \in M$ and $\Om|_{T_p}$ is non-degenerate, it follows that $N 
\subset T_p^\perp$ and hence, $N \cap T_p = 0$ for all $p \in M$. Thus, $\phi_0 := \pi \circ \phi: M 
\ra V_0$ is an immersion, and $\Om_0|_{T_p^0}$ is non-degenerate, where $T_p^0 = 
d\phi_0(T_pM) = \pi(T_p)$. Moreover, for the reflections we have 
\[
\pi \circ \sigma_{(T_p, \phi(p))} = \sigma_{(T_p^0, \phi_0(p))} \circ \pi
\]
for all $p$, whence $\sigma_{(T_p^0, \phi_0(p))} \circ \phi_0 = \sigma_{(T_p^0, \phi_0(p))} \circ \pi 
\circ \phi = \pi \circ \sigma_{(T_p, \phi(p))} \circ \phi = \pi \circ \phi \circ s_p = \phi_0 \circ s_p$, 
using that $\phi$ is an extrinsic symplectic immersion. Thus, $\phi_0: M \ra V_0$ is an extrinsic 
symplectic immersion as well.

If $\phi_0(M) \subset V_0' \subset V_0$, then $\phi(M) \subset \pi^{-1}(V_0') \subset V'$. Since we 
chose $V' \subset V$ to be the smallest subspace containing $\phi(M)$, it follows that $V_0' = 
V_0$, and this implies that $\phi_0: M \ra V_0$ is full.
{\hfill \rule{.5em}{1em}\mbox{}\bigskip}

Theorem~B from the introduction now follows immediately from combining Theorem~C with 
Proposition~\ref{prop:full-equivalent}, and Corollary~D is an immediate consequence of 
Theorem~C.

{\small

}
\noindent
{\small {\sc Fakult\"at f\"ur Mathematik, Technische Universit\"at Dortmund, Vo\-gel\-poths\-weg 87, 
44221 Dortmund, Germany}

\noindent
{\em E-mail address:} tom.krantz@math.uni-dortmund.de, lschwach@math.uni-dortmund.de}

\end{document}